\documentclass[10pt]{article}

\usepackage[utf8]{inputenc}
\usepackage[T1]{fontenc}

\usepackage{epsf}
\usepackage{amsmath}

\allowdisplaybreaks

\usepackage[showframe=false]{geometry}
\usepackage{changepage}

\usepackage{epsfig}
\usepackage{amssymb}

\usepackage{amsthm}
\usepackage{setspace}
\usepackage{cite}
\usepackage{mcite}

\usepackage{algorithmic}  
\usepackage{algorithm}

\usepackage{shadow}
\usepackage{fancybox}
\usepackage{fancyhdr}

\usepackage{color}
\usepackage[usenames,dvipsnames,svgnames,table]{xcolor}

\definecolor{mypurple}{rgb}{.4,.0,.5}

\usepackage[hyphens]{url}

\usepackage[colorlinks=true,
            linkcolor=black,
            urlcolor=blue,
            citecolor=purple]{hyperref}

\usepackage{breakurl}

\def\s{{\bf s}}

\def\x{{\bf x}}

\def\x{{\mathbf x}}

\def\s{{\bf s}}

\def\x{{\bf x}}

\def\tr{\mbox{Tr}}

\def\tr{{\rm tr}\,}

\def\be{\begin{equation}}
\def\ee{\end{equation}}
\def\ba{\left[\begin{array}}
\def\ea{\end{array}\right]}

\def\s{{\bf s}}

\def\x{{\bf x}}

\def\1{{\bf 1}}

\def\g{{\bf g}}
\def\0{{\bf 0}}

\def\cG{{\mathcal G}}

\def\mR{{\mathbb R}}

\def\mN{{\mathbb N}}
\def\mE{{\mathbb E}}
\def\mS{{\mathbb S}}

\def\lp{\left (}
\def\rp{\right )}

\def\s{{\bf s}}

\def\x{{\bf x}}

\def\x{{\mathbf x}}

\def\s{{\bf s}}

\def\x{{\bf x}}

\def\tr{\mbox{Tr}}

\def\tr{{\rm tr}\,}

\def\be{\begin{equation}}
\def\ee{\end{equation}}
\def\ba{\left[\begin{array}}
\def\ea{\end{array}\right]}

\def\s{{\bf s}}

\def\x{{\bf x}}

\def\({\left (}
\def\){\right )}

\def\1{{\bf 1}}

\def\g{{\bf g}}
\def\0{{\bf 0}}

\def\cX{{\mathcal X}}

\def\cL{{\mathcal L}}

\def\cX{{\mathcal X}}
\def\cQ{{\mathcal Q}}

\usepackage{xcolor}
\usepackage{color}

\definecolor{darkgreen}{rgb}{0, 0.4,0}

\definecolor{purplebrown}{rgb}{0.5,0.1,0.6}

\definecolor{ultclupcol}{rgb}{0.1,0.5,0.5}

\definecolor{mytrycolor}{rgb}{0.5,0.7,0.2}

\definecolor{ultclupcola}{rgb}{.5,0,.5}

\definecolor{shadebrown}{rgb}{0.1,0.1,0.9}
\definecolor{lightblue}{rgb}{0.2,0,1}

\usepackage{fancybox}
\usepackage{graphicx}
\usepackage{epstopdf}
\usepackage{epsfig}
\usepackage{wrapfig}
\usepackage{subfigure}

\usepackage{xcolor}
\usepackage{tcolorbox}

\newtcbox{\xmybox}{on line,
arc=7pt,
before upper={\rule[-3pt]{0pt}{10pt}},boxrule=0pt,
boxsep=0pt,left=6pt,right=6pt,top=0pt,bottom=0pt,enhanced, coltext=blue, colback=white!10!yellow}

\newtcbox{\xmyboxa}{on line,
arc=7pt,
before upper={\rule[-3pt]{0pt}{10pt}},boxrule=0pt,
boxsep=0pt,left=6pt,right=6pt,top=0pt,bottom=0pt,enhanced, colback=white!10!yellow}

\newtcbox{\xmyboxb}{on line,
arc=7pt,
before upper={\rule[-3pt]{0pt}{10pt}},boxrule=1pt,colframe=darkgreen!100!blue,
boxsep=0pt,left=6pt,right=6pt,top=0pt,bottom=0pt,enhanced, colback=white!10!yellow}

\newtcbox{\xmyboxc}{on line,
arc=7pt,
before upper={\rule[-3pt]{0pt}{10pt}},boxrule=.7pt,colframe=blue!100!blue,
boxsep=0pt,left=6pt,right=6pt,top=0pt,bottom=0pt,enhanced, coltext=blue, colback=white!10!yellow}

\newtcbox{\xmytboxa}{on line,
arc=7pt,
before upper={\rule[-3pt]{0pt}{10pt}},boxrule=.0pt,colframe=pink!50!yellow,
boxsep=0pt,left=6pt,right=6pt,top=0pt,bottom=0pt,enhanced, coltext=white, colback=blue!40!red}

\newtcbox{\xmytboxb}{on line,
arc=7pt,
before upper={\rule[-3pt]{0pt}{10pt}},boxrule=.0pt,colframe=pink!50!yellow,
boxsep=0pt,left=6pt,right=6pt,top=0pt,bottom=0pt,enhanced, coltext=white, colback=white!40!green}

\makeatletter
\newcommand\subsubsubsection{\@startsection{paragraph}{4}{\z@}{-2.5ex\@plus -1ex \@minus -.25ex}{1.25ex \@plus .25ex}{\normalfont\normalsize\bfseries}}
\newcommand\subsubsubsubsection{\@startsection{subparagraph}{5}{\z@}{-2.5ex\@plus -1ex \@minus -.25ex}{1.25ex \@plus .25ex}{\normalfont\normalsize\bfseries}}
\makeatother

\newtheorem{theorem}{Theorem}

\newtheorem{corollary}{Corollary}

\newtheorem{remark}{Remark}

\begin{document}

\begin{singlespace}

\title {An RDT based confirmation of Lehner's formula for Kronecker-Gaussian matrices
}
\author{
\textsc{Mihailo Stojnic
\footnote{e-mail: {\tt flatoyer@gmail.com}}
}}
\date{}
\maketitle

\centerline{{\bf Abstract}} \vspace*{0.1in}

Remarkable breakthroughs \cite{Schultz05,HaagThor05} established the so-called strong asymptotic freeness between classical Gaussian random ensembles and their semicircular free counterparts. Along the same lines, the Lehner formula \cite{Lehner99}, associated with the free counterpart, precisely determines the spectral edges of Kronecker-Gaussian matrices.

We here revisit and study this formula without the utilization of random matrix theory and spectral methods. In particular, relying on concepts utilized within \emph{Random Duality Theory} (RDT) \cite{StojnicCSetam09,StojnicRegRndDlt10,Stojniccovmat26}, we reconfirm Lehner's formula and effectively reprove the key asymptotic freeness results obtained via spectral methods in \cite{HaagThor05,Schultz05}.

\vspace*{0.25in} \noindent {\bf Index Terms: Strong asymptotic freeness; Lehner's formula; Random duality theory (RDT)}.

\end{singlespace}

\section{Introduction}
\label{sec:back}

Excellent progress has been achieved over the last couple of decades in studying the so-called free probability equivalent models. Remarkable breakthroughs \cite{Schultz05, HaagThor05} considered the norm of polynomials of classical Gaussian random ensembles and showed a limiting convergence towards their semicircular free counterparts, effectively establishing the so-called strong \emph{asymptotic} freeness. Many extensions followed, including different types of randomness \cite{Anderson13} or even combinations of deterministic and random (Wigner/GUE/Haar/permutations) scenarios \cite{Parraud22, CollMale14, Male12, GuionnShly09, CollinsGP22, Belinschi17}. The range of applications turned out to be diverse as well and has recently expanded rapidly to encompass, among others, geometric \cite{HideMagee23}, graph-related \cite{LubPer16, BorCollins19}, and algebraic considerations \cite{Hayes22}—many of which allowed to settle or resolve long-standing open problems and conjectures \cite{Hayes22, HaagThor05, LubPer16, BorCollins19}.

The recent establishing of the so-called \emph{intrinsic} freeness \cite{bandfree23, bandfree24} has been particularly fruitful. It allowed for a way to circumvent the asymptotic scenarios and to effectively analyze spectral properties of almost any Gaussian matrix. This has a plethora of applications, including precise studying algorithmic and information-theoretic properties of classical spiked models and sample covariances  \cite{bandfree23, bandfree24, HandelSurvey26}.

In \cite{Lehner99}, Lehner developed an elegant deterministic formula for the spectral edges characterization of semicircular free equivalent to Kronecker–Gaussian ensembles. In this work, we deviate from random matrix theory and develop a different methodology to establish Lehner's formula  without relying on the associated strong asymptotic freeness. In particular, we show that leaning on concepts utilized within Random Duality Theory (RDT) \cite{StojnicCSetam09, StojnicRegRndDlt10, Stojniccovmat26} is sufficient to reprove the key asymptotic freeness results established through spectral characterizations in \cite{HaagThor05, Schultz05}.

\section{Kronecker--Gaussian matrices -- preliminaries and prior work}
\label{sec:sampcov}

To put everything on right mathematical track, we first introduce precise definitions of objects that we will study.

\subsection{Mathematical setup and contextualization}
\label{sec:context}

 Let $n\in\mN$, $k\in\mN$, and  $l\in\mN$ be three positive integers. We operate in a large dimensional context and assume large $n$. While our analysis allows that $k$ and $l$ depend on $n$, for the easiness of presentation we take them as fixed. 

Let $\{\bar{G}_i=\bar{G}_i^T\in\mR^{n\times n}\}_{i=1,\dots,l}$ be a collection of independent symmetric standard normal Gaussian matrices. In particular, for $G_i\in\mR^{n\times n}$ having independent standard normal components, we set
\begin{equation}\label{eq:amat1a0}
 \bar{G}_i = \frac{1}{\sqrt{2}} \lp G_i + G_i^T\rp, i=1\dots,l. 
 \end{equation}
Let $\{A_i=A_i^T\in\mR^{k\times k}\}_{i=0,1,\dots,l}$ be a collection of (symmetric) positive semi-definite deterministic matrices. Assuming that $I\in\mR^{n\times n}$ is the standard identity matrix, the following sum of Kroneckered Gaussians plays a critical role in studying spectral norms of matrix polynomials \cite{HaagThor05,CollinsGP22,Schultz05}
\begin{equation}\label{eq:amat1a0b0}
 H = A_0\otimes I + \frac{1}{\sqrt{n}} \sum_{i=1}^{l} A_i\otimes \bar{G}_i. 
 \end{equation}
A simplified main takeaway is that the spectrum of $H$ is (asymptotically in $n$) close to the spectrum of the following semicircular free equivalent
\begin{equation}\label{eq:amat1a0b0}
 H_{free} = A_0\otimes I + \frac{1}{\sqrt{n}} \sum_{i=1}^{l} A_i\otimes \s_i, 
 \end{equation}
where $\s_i$ are from a semicircular family.

 Let function $\lambda_i(\cdot)$ determine the $i$-th smallest eignevalue of its argument. One then  has
 \begin{equation}\label{eq:amat1a0b1}
 \lambda_1(H) \leq  \lambda_2(H) \leq \dots  \leq  \lambda_n(H) , 
 \end{equation}
 where $ \lambda_1(H)$ and  $ \lambda_n(H)$ are the smallest and the largest eigenvalues of $H$, respectively. Consider the following Lehner spectral edge characterization \cite{Lehner99} 
\begin{eqnarray}\label{eq:amat1a0b2}
 \rho_n &  \triangleq & \inf_{Z\succeq 0 } \lambda_n\lp A_0\otimes I + Z+ \frac{1}{\sqrt{n}} \sum_{i=1}^{l} A_iZ^{-1}A_i\rp 
 \nonumber \\
 \rho_1 & \triangleq & \sup_{0\succeq Z } \lambda_1\lp A_0\otimes I + Z+ \frac{1}{\sqrt{n}} \sum_{i=1}^{l} A_iZ^{-1}A_i\rp  . 
 \end{eqnarray}
One notes that objects in (\ref{eq:amat1a0b2}) are fully deterministic. Moreover, relying on Schur complement positive definiteness, the underlying optimizations can readily be transformed into a convex SDP. (As $k$ increases, SDPs may not necessarily be among the programs most amenable to practical implementations; nonetheless, they are at least in principle solvable in polynomial time. For more on the optimization aspects of (\ref{eq:amat1a0b2}), see \cite{Kunisky26}). We also observe that $\inf$ is used even though the formulations are finite-dimensional. To make writing easier, in the rest of the paper, we assume that $A_i$'s are such that $\inf$ can be replaced by $\max$.

Various forms of association between $\rho_1$ and $\rho_n$ on the one side and $\lambda_1(H)$ and $\lambda_n(H)$ on the other have been proven in \cite{ChenGVTH26,Anderson13,bandfree23,bandfree24,HaagThor05,Schultz05,CollinsGP22} through a host of different approaches. The employed methodologies range from classical free probability Stieltjes transform/matrix Dyson equation \cite{HaagThor05,Schultz05,CollinsGP22} and first order \cite{BorCollins19,BordColl24,Anderson13} based methods, to matrix concentrations and interpolations \cite{CollinsGP22,bandfree23,bandfree24,ChenGVTH26}. (For studies particularly tailored for unitary and/or permutations ensembles, see \cite{BorCollins19,Magee25,Cassidy24,BordColl24,CollMale14}, respectively; for more general ensembles, including those unrelated to Gaussian, see, e.g., \cite{GuionnShly09,BrailUniv24}). Following the above-mentioned spectrum closeness main takeaway (relevant to the dimensional scenario of our interest here), one has
 \begin{eqnarray}\label{eq:amat1a0b3}
\lim_{n\rightarrow \infty} \lambda_1(H) = \rho_1 \quad \quad \mbox{and} \quad \quad
\lim_{n\rightarrow \infty} \lambda_n(H) = \rho_n , 
 \end{eqnarray}
 with very strong concentrations as well. In the regimes of our interest here, concentrations are induced by large $n$. However, that is not necessarily needed and, in general, the results of \cite{ChenGVTH26,Anderson13,bandfree23,bandfree24,HaagThor05,Schultz05,CollinsGP22} are actually stronger and cover a variety of different dimensional scenarios.

We would particularly highlight \cite{bandfree23,bandfree24}, which establishes the so-called \emph{intrinsic} freeness. Differently from, for example, \cite{HaagThor05,CollinsGP22,Schultz05}, which operate in an asymptotic $n\rightarrow \infty$ regime, \cite{bandfree23,bandfree24} take $n=1$ and manage to establish that concentrations can be deduced via $\|\mbox{cov}(H)\|$ and $k$. While the $n=1$ choice might appear as if one deviates from the block structure, it actually allows in a convenient fashion considerations of a variety of options for covariance of $H$ that are often very relevant in practical consideration. It is particularly useful in scenarios where matrices deviate from standard normals, but in a highly unpredictable way. While a host of problems that can be handled through these methods is discussed in \cite{bandfree24}, we would here single out applications related to algorithmic properties of classical spiked models and sample covariances. Namely, the results of \cite{bandfree23,bandfree24} allow handling these problems on a very precise level and accurately determine key associated features, such as residual estimation errors and BBP phase transitions, which is a stark contrast with typical scaling-order characterizations \cite{KolLou17,Tropp11,VershNonAsym12,KanLovSim97, Bourgain99, Rudelson99, GianHarTso05, Paouris06, Adam10,Adamczak11} (for more on alternative precise analyses methodologies, see also, e.g., \cite{BBP05,MontRich14,DonGavJohn18,Lesetal17,PerryWB20,BarbMM17,PourBM24,BehneReeves22,PakKK23,Stojniccovmat26}).

A different approach towards establishing (\ref{eq:amat1a0b3}) was discussed in \cite{CollYama26}. Utilizing the Sudakov-Fernique \cite{Sudakov71,Fernique74,Fernique75,Vitale00} comparison technique and assuming positive semi-definiteness of $A_i$, \cite{CollYama26} showed that slightly (negligibly) modified $\rho_1$ and $\rho_n$ are the lower and upper bounds on $\lim_{n\rightarrow \infty} \mE \lambda_1(H) $ and $\lim_{n\rightarrow \infty} \mE \lambda_n(H) $.

We here prove (\ref{eq:amat1a0b3}) relying on comparison concepts as well. However, we lean on methodologies utilized within \emph{Random Duality Theory} (RDT) \cite{StojnicCSetam09,StojnicRegRndDlt10,Stojniccovmat26} and prove that $\rho_n$ is not only the upper bound on $\lim_{n\rightarrow \infty} \mE\lambda_n(H) $, but also its exact value (the same then automatically holds for $\rho_1$ and $\lim_{n\rightarrow \infty} \mE \lambda_1(H) $).

\subsection{Replacing Kronecker--Gaussians with plain Gaussians}
\label{sec:kronrep}
 
Before we start with the RDT machinery, we find it useful to introduce a few technical preliminaries. First, we note that once $\lambda_n(H) $ is handled, $\lambda_1(H)$ follows immediately by flipping a sign and relying on Gaussian symmetry (i.e., on the fact that $\bar{G}_i$ and $-\bar{G}_i$ are identically distributed). In the rest of the paper, we therefore focus only on $\lambda_n(H)$. 

We start by observing that
\begin{equation}\label{eq:kronrep1}
\lambda_n(H)  = \max_{\x\in\mS^{kn}} \x^T\lp A_0\otimes I + \frac{1}{\sqrt{n}} \sum_{i=1}^{l} A_i\otimes \bar{G}_i \rp \x, 
 \end{equation}
where $\mS^{nk}$ stands for the unit sphere in $\mR^{nk}$, i.e., $\mS^{nk}=\{\x\in\mR^{nk} \hspace{.03in} | \hspace{.03in} \|\x\|_2=1 \}$. Let $\bar{X}\in\mR^{n\times k}$ be a $n\times k$ matrix such that 
\begin{equation}\label{eq:kronrep2}
\x=\mbox{vec}(\bar{X}), 
\end{equation}
where $\mbox{vec}(\cdot)$ stacks the columns (starting from the first to the last) of its argument into a vector. Also, let $\bar{X}_i,i=1\dots,k$ be the $i$-th column of $\bar{X}$. One then has
\begin{equation}\label{eq:kronrep3}
\|\x\|_2^2=\|\mbox{vec}(\bar{X})\|_2^2 = \sum_{i=1}^{k}\|\bar{X}_i\|_2^2 = \tr(\bar{X}^T\bar{X}). 
\end{equation}
Moreover,
\begin{eqnarray}\label{eq:kronrep4}
\lambda_n(H) &  = &  \max_{\tr(\bar{X}^T\bar{X})=1} \lp \sum_{r=1}^n\sum_{s=1}^k \lp A_0\rp_{rs}\bar{X}_r^T\bar{X}_s + \frac{1}{\sqrt{n}} \sum_{i=1}^{l} \sum_{r=1}^n\sum_{s=1}^k \lp A_i\rp_{rs}\bar{X}_r^T \bar{G}_i \bar{X}_s \rp 
\nonumber \\
\lambda_n(H) & =  & \max_{\tr(\bar{X}^T\bar{X})=1} \lp \tr(A_0 \bar{X}^T\bar{X}) + \frac{1}{\sqrt{n}} \sum_{i=1}^{l} \tr(A_i \bar{X}^T \bar{G}_i \bar{X} ) \rp 
\nonumber \\
  & =  & \max_{\tr(\bar{X}^T\bar{X})=1} \lp \tr(A_0 \bar{X}^T\bar{X}) + \frac{\sqrt{2}}{\sqrt{n}} \sum_{i=1}^{l} \tr(A_i \bar{X}^T G_i \bar{X} ) \rp ,
   \end{eqnarray}
where, for $i=0,1,\dots,l$,  the element of $A_i$ in the intersection of the $r$-th row and the $s$-th column is $\lp A_i\rp_{rs}$. The above effectively replaces the Kronecker--Gaussians with plain Gaussians.  In what follows, we characterize  $\lambda_n(H)$, via  Random Duality Theory (RDT).

\section{RDT characterization of $\lambda_n(H)$}
\label{sec:xirdt}

We start by recalling on the following  four key RDT principles \cite{StojnicCSetam09,StojnicRegRndDlt10,Stojniccovmat26} (for more on  upgrades and associated algorithmic implications, see, e.g., \cite{Stojnicalgbp25,Stojnicclupsk25}):

\begin{enumerate}

\item \emph{Finding underlying optimization algebraic representation}

\item \emph{Determining random dual} 

\item \emph{Handling random dual}

\item \emph{Double-checking strong random duality.}

\end{enumerate}
Each of these four principles, together with its relation to the problem of interest here, is discussed in detail below. Before proceeding with the details, we should also mention that the first two steps are sufficient to obtain upper bounds and have been discussed in \cite{CollYama26}. Here we include these steps for the completeness and discuss them through a different comparison concept. The third step is fairly simple and is included to ensure complete matching and practical recovery of prior results. Finally, as is usual the case within the RDT, the most challenging is the last step and it is typically the heart of the entire machinery.

\subsection{Finding underlying optimization algebraic representation} 
\label{sec:randpr}

We start with an ortho-diagonal transformation that makes writings below slightly neater. To that end we observe that for $X\in\mR^{n\times k} $ and  $R\in\mR^{k\times k} $ such that $X^TX=I$ and $\tr(R^TR)=1$,  change of variables $\bar{X} = XR$ gives  
\begin{equation}\label{eq:inteq1ab0}
\bar{X}^T\bar{X}=R^TR \quad \mbox{and}\quad \frac{\sqrt{2}}{\sqrt{n}} \tr(A_i \bar{X}^T G_i \bar{X} ) =     \frac{\sqrt{2}}{\sqrt{n}} \tr(A_i R^TX^T G_i XR ).
\end{equation}
We set
\begin{equation}\label{eq:inteq1ab0b0}
F_i = F_i(R) \triangleq RA_iR^T,
\end{equation}
and rewrite (\ref{eq:kronrep4}) as
\begin{eqnarray}\label{eq:kronrep5}
\lambda_n(H) & =  & \max_{X^TX=I,\tr(R^TR)=1} \lp \tr(A_0R^TR) +  \frac{\sqrt{2}}{\sqrt{n}} \sum_{i=1}^{l}  \tr(A_i R^TX^T G_i XR ) \rp 
\nonumber \\
\lambda_n(H) & =  & \max_{\tr(R^TR)=1} \lp \tr(A_0R^TR) +  \max_{X^TX=I}  \frac{\sqrt{2}}{\sqrt{n}} \sum_{i=1}^{l}  \tr(A_i R^TX^T G_i XR ) \rp 
\nonumber \\
\lambda_n(H) & =  & \max_{\tr(R^TR)=1} \lp \tr(F_0) +  \max_{X^TX=I}  \frac{\sqrt{2}}{\sqrt{n}} \sum_{i=1}^{l}  \tr(X^T G_i XF_i ) \rp 
\nonumber \\
\lambda_n(H) & =  & \max_{\tr(R^TR)=1} \lp \tr(F_0) + \frac{\sqrt{2}}{\sqrt{n}} \xi \rp  ,
\end{eqnarray}
where
\begin{eqnarray}\label{eq:kronrep6}
 \xi  & \triangleq &  \max_{X^TX=I}   \sum_{i=1}^{l}  \tr(X^T G_i XF_i ) ,
\end{eqnarray}
 is called \emph{random primal} within the RDT.

\subsection{Determining random dual} 
\label{sec:randdual}

The following theorem establishes the so-called random dual.

\begin{theorem}
\label{thm:thm1}
For large $n\in\mN$ and fixed $k,l\in\mN$, let $G_{i}\in\mR^{n\times n}$ and $G_{i}^{(1)}\in\mR^{k\times n},i=1,\dots,l$ be independent  standard normal matrices. For deterministic $A_i\in\mR^{k\times k}$ such that $A_i=A_i^T\succeq 0,i=1,\dots,l$ and fixed $R$ such that $\tr(R^TR)=1$, let $F_i$ be as in 
(\ref{eq:inteq1ab0b0}). Set
\begin{eqnarray}
   \label{eq:thm1eq1}
 L & = &  
 \max_{X\in\mR^{n\times k},X^TX=I} \sum_{i=1}^{l}  \sqrt{2}\tr(G_i^{(1)} X F_i)   .
\end{eqnarray}
Let $\xi$ be as in (\ref{eq:kronrep6}) and (\ref{eq:inteq1ab0}).  One then has  
\begin{eqnarray}
   \label{eq:thm1eq2}
\lim_{n\rightarrow \infty}  \frac{1}{\sqrt{n}}  \mE \xi   \leq  \lim_{n\rightarrow \infty} \frac{1}{\sqrt{n}} \mE L,
\end{eqnarray}
with the righthand side being the so-called random dual.
\end{theorem}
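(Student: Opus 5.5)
The approach is a direct Gaussian comparison of Sudakov--Fernique type \cite{Sudakov71,Fernique74,Fernique75,Vitale00} between two centered Gaussian processes indexed by the same compact set $\cX=\{X\in\mR^{n\times k}\,|\,X^TX=I\}$:
\begin{equation*}
f(X)=\sum_{i=1}^{l}\tr(X^TG_iXF_i)=\sum_{i=1}^{l}\langle G_i, XF_iX^T\rangle,\qquad g(X)=\sqrt{2}\sum_{i=1}^{l}\tr(G_i^{(1)}XF_i).
\end{equation*}
Then $\xi=\max_{X\in\cX}f(X)$ and $L=\max_{X\in\cX}g(X)$. Both processes are centered and continuous on a compact index set. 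Hence, if $\mE(f(X)-f(Y))^2\leq \mE(g(X)-g(Y))^2$ for all $X,Y\in\cX$, Sudakov--Fernique gives $\mE\xi\leq\mE L$ for every $n$. Dividing by $\sqrt{n}$ and passing to the limit (or to $\limsup$/$\liminf$ if existence of the limits is not assumed) yields (\ref{eq:thm1eq2}). The statement is thus non-asymptotic in disguise; large $n$ plays no role in this step.

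The key computation is of the two increment variances. Set $Q=X^TY\in\mR^{k\times k}$ and use $X^TX=Y^TY=I$ and $F_i=F_i^T$. Independence of the $G_i$ gives $\mE f(X)f(Y)=\sum_i\langle XF_iX^T,YF_iY^T\rangle=\sum_i\tr(F_iQF_iQ^T)$. Therefore
\begin{equation*}
\mE(f(X)-f(Y))^2=\sum_{i=1}^{l}\lp 2\tr(F_i^2)-2\tr(F_iQF_iQ^T)\rp .
\end{equation*}
Writing $\tr(G_i^{(1)}XF_i)=\langle G_i^{(1)},F_iX^T\rangle$ gives $\mE g(X)g(Y)=2\sum_i\tr(F_iX^TYF_i)=2\sum_i\tr(F_i^2Q)$. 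Therefore
\begin{equation*}
\mE(g(X)-g(Y))^2=\sum_{i=1}^{l}\lp 4\tr(F_i^2)-4\tr(F_i^2Q)\rp .
\end{equation*}
The required inequality then reduces, term by term in $i$, to
\begin{equation*}
\tr(F_i^2)-2\tr(F_i^2Q)+\tr(F_iQF_iQ^T)\geq 0 .
\end{equation*}

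The main (though short) obstacle is recognizing this expression as a square, and this is exactly where the assumption $A_i\succeq 0$ enters. Since $F_i=RA_iR^T\succeq 0$, the square root $F_i^{1/2}$ exists. Expanding, and using $\tr(F_iQF_i)=\tr(Q^TF_i^2)=\tr(F_i^2Q)$, one checks that
\begin{equation*}
\tr\lp (I-Q)^TF_i(I-Q)F_i\rp=\tr(F_i^2)-2\tr(F_i^2Q)+\tr(F_iQF_iQ^T)=\|F_i^{1/2}(I-Q)F_i^{1/2}\|_F^2\geq 0 .
\end{equation*}
This establishes the increment domination for all $X,Y\in\cX$. Summing over $i$ and invoking Sudakov--Fernique completes the argument.

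Two routine points remain to verify: that both processes are indeed centered, which holds since they are linear in Gaussians; and that the maxima are measurable and integrable, which follows from compactness of $\cX$ and continuity of the processes.
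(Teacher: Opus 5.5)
Your proof is correct and is essentially the paper's argument. The paper also runs a Gaussian comparison between $\sum_i\tr(X^TG_iXF_i)$ and $\sqrt{2}\sum_i\tr(G_i^{(1)}XF_i)$, and reduces it to the same identity $\tr((I-Q)^TF_i(I-Q)F_i)=\|\sqrt{F_i}(Q-I)\sqrt{F_i}\|_F^2\geq 0$ with $Q=(X^{(1)})^TX^{(2)}$. The only difference is the comparison tool: the paper invokes Slepian's lemma (Gordon's version, which needs equal variances), so it first adds the $X$-independent term $\sum_i\|F_i\|_F\bar{\g}_i$ to the $\xi$-process to match variances and then drops it on taking expectations, whereas your Sudakov--Fernique argument needs only increment domination, which is the same inequality because that added term cancels in increments.
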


\begin{proof} For a vector of standard normals $\bar{\g}\in\mR^{l\times 1}$ (independent of all other randomness), we consider the following  centered Gaussian processes indexed by an array $\cX = \{X\}$
  \begin{eqnarray}
\label{eq:mr1}
 \cG (\cX) & \triangleq &  \cG (X)  \triangleq  \sum_{i=1}^{l}  \tr( X^T G_i X F_i ) + \sum_{i=1}^{l} \|F_i\|_F \bar{\g}_i  \nonumber   \\
 \cG_u (\cX) & \triangleq &  \cG_u (X)  \triangleq   \sum_{i=1}^{l}      \sqrt{2}\tr(G_i^{(1)} X F_i) .
  \end{eqnarray}
 For two arrays $\cX^{(1)}=\{ X^{(1)}\}$ and $\cX^{(2)}=\{ X^{(2)}\}$ with $ \lp X^{(i)}\rp^T X^{(i)}=I,i=1,2$,  we further write
  \begin{eqnarray}
\label{eq:mr2}
\mE \cG (\cX^{(1)})\cG (\cX^{(2)})   & =  &  \sum_{i=1}^{l} \lp \phi_i + \|F_i\|_F^2 \rp
\nonumber   \\
\mE \cG_u (\cX^{(1)})\cG_u (\cX^{(2)})  & =  & 2\sum_{i=1}^{l} \theta_i,
  \end{eqnarray}
  where
  \begin{eqnarray}
\label{eq:mr2a0}
 \phi_i & = & 
\mE \tr( \lp X^{(1)}\rp^T G_i X^{(1)} F_i ) 
   \tr(  \lp X^{(2)}  \rp^T G_i X^{(2)} F_i ) 
\nonumber   \\
\theta_i & =  &  \mE \tr(G_i^{(1)}  X^{(1)} F_i)  \tr(G_i^{(1)}  X^{(2)} F_i)  .
  \end{eqnarray}
Moreover,
  \begin{eqnarray}
\label{eq:mr2a1}
 \phi_i & = & 
\mE \tr( \lp X^{(1)}\rp^T G_i X^{(1)} F_i ) 
   \tr(  \lp X^{(2)}  \rp^T G_i X^{(2)} F_i ) 
=
 \tr\lp\lp  X^{(1)} F_i \lp X^{(1)}\rp^T \rp^T \lp  X^{(2)} F_i \lp X^{(2)}\rp^T \rp\rp
   \nonumber \\
   & = & 
 \tr\lp  \lp X^{(2)}\rp^TX^{(1)}  F_i\lp X^{(1)}\rp^TX^{(2)}  F_i   \rp
= \tr\lp  \sqrt{F_i} \lp X^{(2)}\rp^TX^{(1)}  \sqrt{F_i}  \sqrt{F_i} \lp X^{(1)}\rp^TX^{(2)}   \sqrt{F_i}   \rp .\nonumber \\
   \end{eqnarray}
 We also have
 \begin{equation}
\label{eq:mr2a2}
\theta_i  =    \mE \tr(G_i^{(1)}  X^{(1)} F_i)  \tr(G_i^{(1)}  X^{(2)} F_i)  
 =    \tr\lp F_i \lp X^{(1)}\rp^TX^{(2)}  F_i  \rp 
  =     \tr\lp \sqrt{F_i} \lp X^{(1)}\rp^TX^{(2)} \sqrt{F_i} F_i  \rp .
   \end{equation}
    From (\ref{eq:mr2})--(\ref{eq:mr2a2}), one then finds
  \begin{align}
\label{eq:mr5}
  \mE & \cG (\cX^{(1)})\cG (\cX^{(2)})
 -
\mE \cG_u (\cX^{(1)})\cG_u (\cX^{(2)} ) =
\nonumber
\\
 & =      \sum_{i=1}^{l} \lp \phi_i + \|F_i\|_F^2 \rp  -2\sum_{i=1}^{l} \theta_i
\nonumber
\\
& = 
\sum_{i=1}^{l} \lp \tr\lp  \sqrt{F_i} \lp X^{(2)}\rp^TX^{(1)}  \sqrt{F_i}  \sqrt{F_i} \lp X^{(1)}\rp^TX^{(2)}   \sqrt{F_i}   \rp  + \tr(F_iF_i) \rp
\nonumber
\\
& -
  2 \sum_{i=1}^{l}  \tr\lp \sqrt{F_i} \lp X^{(1)}\rp^TX^{(2)} \sqrt{F_i} F_i  \rp 
\nonumber
\\
& = 
\sum_{i=1}^{l} \lp \tr\lp  \sqrt{F_i} \lp X^{(2)}\rp^TX^{(1)}  \sqrt{F_i}  \sqrt{F_i} \lp X^{(1)}\rp^TX^{(2)}   \sqrt{F_i}   \rp  + \tr(F_iF_i) \rp
\nonumber
\\
& -
   \sum_{i=1}^{l}  \tr\lp F_i \sqrt{F_i} \lp X^{(1)}\rp^TX^{(2)} \sqrt{F_i}   \rp 
 -
   \sum_{i=1}^{l}  \tr\lp \sqrt{F_i} \lp X^{(2)}\rp^TX^{(1)} \sqrt{F_i} F_i  \rp 
\nonumber
\\
& = 
   \sum_{i=1}^{l}  \tr
   \lp
   \lp \sqrt{F_i} \lp X^{(1)}\rp^TX^{(2)} \sqrt{F_i} - F_i  \rp^T
   \lp \sqrt{F_i} \lp X^{(1)}\rp^TX^{(2)} \sqrt{F_i} - F_i  \rp
   \rp 
 \nonumber \\
&  \geq   0.  
 \end{align}
We also note  
  \begin{align}
\label{eq:mr5a0}
  \mE & \cG (\cX^{(1)})\cG (\cX^{(1)})
 -
\mE \cG_u (\cX^{(1)})\cG_u (\cX^{(1)} )
  = 
\nonumber
\\
& = 
   \sum_{i=1}^{l}  \tr
   \lp
   \lp \sqrt{F_i} \lp X^{(1)}\rp^TX^{(1)} \sqrt{F_i} - F_i  \rp^T
   \lp \sqrt{F_i} \lp X^{(1)}\rp^TX^{(1)} \sqrt{F_i} - F_i  \rp
   \rp 
\nonumber \\
&  =   0,
  \end{align}
  where the last equality follows since  $\lp X^{(1)}\rp^T X^{(1)}=0$.
  
To complete the proof, we recall on Theorem 1.1 from \cite{Gordon85} (the part of the theorem utilized below was introduced in \cite{Slep62} and is known as Slepian lemma; both results can be deduced as special cases of the concepts discussed in Corollary 3 in \cite{Stojnicgscompyx16}  and in Corollary 4 in \cite{Stojnicgscomp16}).

\begin{theorem}(\cite{Gordon85,Slep62})
\label{thm:Gordonpos1} Let $X_{i}$ and $Y_{i}$, $1\leq i\leq n$, be two centered Gaussian processes which satisfy the following inequalities for all choices of indices
\begin{enumerate}
\item $\mE(X_{i}^2)=\mE(Y_{i}^2)$
\item $\mE(X_{i}X_{l})\leq \mE(Y_{i}Y_{l}), i\neq l$.
\end{enumerate}
 Then
\begin{equation*}
\mE(\min_{i} X_{i})\leq \mE(\min_i Y_{i}) \quad  \Longleftrightarrow \quad \mE(\max_{i} X_{i})\geq \mE(\max_i Y_{i}).
\end{equation*}
\end{theorem}

Applying  Theorem \ref{thm:Gordonpos1} to processes $\cG(\cdot)$ and  $\cG_u(\cdot)$ with correspondence $Y\leftrightarrow\cG$ and $X\leftrightarrow\cG_u$ gives
\begin{align}\label{eq:mt5a1a0}
&  &\mE \max_{\cX^{(a_1)}} \cG(\cX)  & \leq \mE \max_{\cX^{(a_1)}} \cG_u(\cX)
\nonumber \\
\Longleftrightarrow & & \mE \max_{X^TX=I} 
\sum_{i=1}^{l}  \tr( X^T G_i X F_i ) + \sum_{i=1}^{l} \|F_i\|_F \bar{\g}_i 
 & \leq \mE \max_{X^TX=I} \sum_{i=1}^{l}    \sqrt{2}\tr(G_i^{(1)} X F_i) 
\nonumber \\
\Longleftrightarrow & & \mE \max_{X^TX=I} 
\sum_{i=1}^{l}  \tr( X^T G_i X F_i )   
 & \leq \mE \max_{X^TX=I}  \sum_{i=1}^{l}   \sqrt{2}\tr(G_i^{(1)} X F_i) .
 \end{align}
Connecting further (\ref{eq:kronrep6}) and (\ref{eq:mt5a1a0}), we then also find
\begin{eqnarray}\label{eq:mt5a1a1}
 \lim_{n\rightarrow \infty}   \frac{1}{\sqrt{n}}  \mE  \xi  & \leq &   \lim_{n\rightarrow \infty}  \frac{1}{\sqrt{n}} \mE \max_{X^TX=I}  \sum_{i=1}^{l}   \sqrt{2}\tr(G_i^{(1)} X F_i),
 \end{eqnarray}
which, together with (\ref{eq:thm1eq1}), gives  (\ref{eq:thm1eq2}) and completes the proof.
\end{proof}

\begin{remark}
\label{rem:rem0}
To make writing easier and neater, we throughout the paper usually focus on expectations. However, all of the above key quantities trivially concentrate in the dimensional regimes that we consider and the stated results easily extend to hold in probabilistic sense as well. Also, the above results are stated in the $n\rightarrow \infty$ regime. While for this part of presentation that is not necessarily needed, it is stated to ensure agreement with discussions in later sections.
\end{remark}

\subsection{Handling random dual}
\label{sec:handlerd}

Handling the above random dual is relatively simple. We state it for the completeness. The focus on $L$ which is the key object of interest. First, we note 
\begin{eqnarray}
   \label{eq:hrd1}
 L & \triangleq &  
 \max_{X\in\mR^{n\times k}, X^TX=I}  \sum_{i=1}^{l}   \sqrt{2}\tr(G_i^{(1)} X F_i)
   = -  \sqrt{2} \min_{X\in\mR^{n\times k}, X^TX=I}  \sum_{i=1}^{l}  \tr(G_i^{(1)} X F_i) .
\end{eqnarray}
Then for $\Gamma=\Gamma^T$ we write the Lagrangian 
\begin{eqnarray}
   \label{eq:hrd2}
 \cL  = \sum_{i=1}^{l}   \tr(G_i^{(1)} X F_i)  + \tr(\Gamma X^TX )  - \tr(\Gamma)  .
\end{eqnarray}
A combination of (\ref{eq:hrd1}) and  (\ref{eq:hrd2}) together with the strong duality gives
\begin{eqnarray}
   \label{eq:hrd3}
 L  = - \sqrt{2}\min_{X\in\mR^{n\times k}}\max_{\Gamma=\Gamma^T} \cL= -  \sqrt{2} \max_{\Gamma=\Gamma^T} \min_{X\in\mR^{n\times k}} \cL.
\end{eqnarray}
We then  find $X$ derivative
\begin{eqnarray}
   \label{eq:hrd4}
 \frac{d\cL}{dX}  = \sum_{i=1}^{l}    ( F_iG_i^{(1)})^T  + 2X\Gamma .
\end{eqnarray}
After equalling the above derivative to zero, we further have
\begin{eqnarray}
   \label{eq:hrd5}
 X  = -\frac{1}{2}\sum_{i=1}^{l}    ( F_iG_i^{(1)})^T \Gamma^{-1}.
\end{eqnarray}
Plugging this back in (\ref{eq:hrd2}) gives
\begin{eqnarray}
   \label{eq:hrd6}
\min_{X\in\mR^{n\times k}} \cL = -\frac{1}{4}
\tr \lp 
\lp\sum_{i=1}^{l}
( F_iG_i^{(1)})\rp  \lp \sum_{i=1}^{l}( F_iG_i^{(1)})^T \rp \Gamma^{-1}
\rp - \tr(\Gamma).
\end{eqnarray}
Combining (\ref{eq:hrd3}) and  (\ref{eq:hrd6}) further, we obtain  
\begin{eqnarray}
   \label{eq:hrd7}
 L   & = & -  \sqrt{2} \max_{\Gamma=\Gamma^T} \min_{X\in\mR^{n\times k}} \cL
\nonumber \\
& = & \sqrt{2} \min_{\Gamma=\Gamma^T} \lp  \frac{1}{4}
\tr \lp 
\lp\sum_{i=1}^{l}
( F_iG_i^{(1)})\rp  \lp \sum_{i=1}^{l}( F_iG_i^{(1)})^T \rp \Gamma^{-1}
\rp + \tr(\Gamma)\rp.
\end{eqnarray}
The law of large numbers and concentrations give
\begin{eqnarray}
   \label{eq:hrd8}
\lim_{n\rightarrow\infty} \frac{1}{\sqrt{n}} \mE L   
& = & \lim_{n\rightarrow\infty}\frac{\sqrt{2}}{\sqrt{n}} \min_{\Gamma=\Gamma^T} \lp  \frac{n}{4} \tr\lp \lp\sum_{i=1}^{l}
F_i F_i^T\rp \Gamma^{-1}\rp + \tr(\Gamma) \rp.
\end{eqnarray}
After scaling $\Gamma\rightarrow\frac{1}{2}\Gamma\sqrt{n}$ 
\begin{eqnarray}
   \label{eq:hrd9}
\lim_{n\rightarrow\infty} \frac{1}{\sqrt{n}} \mE L   
& = & 
 \frac{1}{\sqrt{2}} \min_{\Gamma=\Gamma^T} \lp \tr\lp\lp \sum_{i=1}^{l}
 F_i F_i^T \rp \Gamma^{-1} \rp+ \tr(\Gamma) \rp
\nonumber \\
& = & 
 \frac{1}{\sqrt{2}} \min_{\Gamma=\Gamma^T} \lp \tr\lp\lp \sum_{i=1}^{l}
( RA_iR^T )( RA_iR^T )^T \rp \Gamma^{-1} \rp+ \tr(\Gamma) \rp
\nonumber \\
& = & 
\min_{\Gamma=\Gamma^T} \bar{L},
\end{eqnarray}
where
\begin{eqnarray}
   \label{eq:hrd9b0}
 \bar{L} \triangleq   \frac{1}{\sqrt{2}} \lp  \tr \lp\lp\sum_{i=1}^{l}
( RA_iR^T )( RA_iR^T )^T \rp \Gamma^{-1} \rp + \tr(\Gamma) \rp
=
\frac{1}{\sqrt{2}} \lp \tr\lp\lp \sum_{i=1}^{l}
F_iF_i^T \rp\Gamma^{-1} \rp + \tr(\Gamma)\rp.
\end{eqnarray}
Combining  (\ref{eq:kronrep5}), (\ref{eq:thm1eq2}), and  (\ref{eq:hrd9}), we find
\begin{eqnarray}\label{eq:hrd9a0}
 \lim_{n\rightarrow \infty}\mE\lambda_n(H) & =  & \max_{\tr(R^TR)=1} \lp \tr(A_0R^TR) +  \sqrt{2}\lim_{n\rightarrow \infty} \frac{1}{\sqrt{n}} \mE \xi \rp  
\nonumber \\
& \leq & 
 \max_{\tr(R^TR)=1} \lp \tr(A_0R^TR) +  \min_{\Gamma=\Gamma^T} \lp \tr\lp\lp \sum_{i=1}^{l}
( RA_iR^T )( RA_iR^T )^T \rp \Gamma^{-1} \rp + \tr(\Gamma) \rp \rp  ,\nonumber \\
\end{eqnarray}
Solving the inner minimization gives optimal $\Gamma$
\begin{eqnarray}\label{eq:hrd9a0a0}
\tilde{\Gamma} = \sqrt{\sum_{i=1}^{l} ( RA_iR^T )( RA_iR^T )^T}
= \sqrt{\sum_{i=1}^{l} F_iF_i^T} , 
\end{eqnarray} 
and
\begin{eqnarray}\label{eq:hrd9a1}
 \lim_{n\rightarrow \infty}\mE\lambda_n(H)  
& \leq & 
 \max_{\tr(R^TR)=1} \lp \tr(A_0R^TR) +  \min_{\Gamma=\Gamma^T} \lp \tr\lp\lp \sum_{i=1}^{l}
( RA_iR^T )( RA_iR^T )^T \rp \Gamma^{-1} \rp + \tr(\Gamma) \rp \rp  
\nonumber \\
&  = & 
 \max_{\tr(R^TR)=1} \lp \tr(A_0R^TR) +  2 \tr \sqrt{ \sum_{i=1}^{l}
( RA_iR^T )( RA_iR^T )^T } \rp  
\nonumber \\
&  = & 
 \max_{\tr(R^TR)=1} \lp \tr(A_0R^TR) +  2\tr\sqrt{ \sum_{i=1}^{l}
 A_iR^TRA_iR^TR } \rp 
 \nonumber \\
&  = & 
 \max_{S=S^T\succeq 0, \tr(S)=1} \lp \tr(A_0S) +  2 \tr\sqrt{ \sum_{i=1}^{l}
 (A_i S)^2 } \rp 
 \nonumber \\
&  = & 
\rho_n ,
\end{eqnarray}
where the last equality is obtained in Proposition 3.3 in \cite{CollYama26} (see, also Theorem 1.9 in \cite{Kunisky26} and \cite{bandfree23,bandfree24} for related results).

Since the above upper bound analysis recovers $\rho_n$, it is essentially tight. However, this is known through spectral characterizations \cite{HaagThor05,CollinsGP22,Schultz05}. Proving it via RDT is quite a challenge. We discuss it in more detail next.

\subsection{Double-checking strong random duality}
\label{sec:strrd}

Random dual  established through Theorem \ref{thm:thm1}  upper-bounds $\mE\xi$. Below we complement this result with a matching lower bound. Considerations from Section \ref{sec:handlerd} will then be sufficient to confirm that not only  $\lim_{n\rightarrow \infty}\mE\lambda_n(H)\leq \rho_n$, but also
$\lim_{n\rightarrow \infty}\mE\lambda_n(H) = \rho_n$.

\subsection{Lower-bounding $\mE \xi$}
\label{sec:strhandbclb}

As mentioned earlier, Theorem \ref{thm:Gordonpos1} can be deduced as a special case of concepts discussed in Corollary 3 in \cite{Stojnicgscompyx16}  and in Corollary 4 in \cite{Stojnicgscomp16}). The machinery developed therein features an important generic principle related to bounding tightness. Namely, it establishes that the bounds are tight provided that two nontrivially overlapped replicated systems cannot double the maximal value of a single system. The very same principle was considered in \cite{TalSph06,TalSK06} for single-partite systems (see, also, e.g., \cite{Stojniccovmat26}). Such systems are of our interest here, but in a significantly more general setting.

In what follows, we check whether this condition indeed holds. Following \cite{Stojnicgscompyx16,Stojnicgscomp16,TalSph06,Stojniccovmat26}, for a real scalar $t\in [0,1]$, we consider basic (non-replicated) interpolated system
 \begin{eqnarray}
 \label{eq:lbstr1}
\mbox{ \textbf{1-rep:} } \quad\quad D(t)=  \max_{X\in\mR^{n\times k},X^TX=I}  \lp \sqrt{t}\sum_{i=1}^{l}  \tr( X^T G_i X F_i )
 +\sqrt{1-t}  \sqrt{2}\tr(G_i^{(1)} X F_i) \rp  .  
   \end{eqnarray}
It is not that difficult to see that $D(t)$ continuously interpolates between $\xi$ and $L$. In particular,  one has
 \begin{eqnarray}
 \label{eq:lbstr2}
 D(1) = \xi \quad\quad \mbox{ and } \quad\quad  
   D(0)= L .
  \end{eqnarray}
Moreover, Theorem \ref{thm:thm1} and machineries of \cite{Stojnicgscompyx16,Stojnicgscomp16,TalSph06,Stojniccovmat26} give
 \begin{eqnarray}
 \label{eq:lbstr2a0}
\lim_{n\rightarrow \infty} \frac{1}{\sqrt{n}}\mE \xi = \lim_{n\rightarrow \infty } \frac{1}{\sqrt{n}}\mE D(1)  \leq \lim_{n\rightarrow \infty } \frac{1}{\sqrt{n}}\mE D(t) \leq  
\lim_{n\rightarrow \infty} \frac{1}{\sqrt{n}}\mE   D(0)= \lim_{n\rightarrow \infty} \frac{1}{\sqrt{n}}\mE   L.
  \end{eqnarray}
 
Let 
 \begin{eqnarray}
 \label{eq:lbstr3a0a0}
\cQ_F 
& \triangleq & \left \{ Q \in\mR^{k\times k} \hspace{.05in} | \hspace{.05in} 
\sum_{i=1}^{l}\tr\lp \lp  \sqrt{F_i} (Q-I)\sqrt{F_i} \rp^T \sqrt{F_i} (Q-I)\sqrt{F_i} \rp >0 \in\mR^{n\times k} \right  \} 
\nonumber \\
& = & 
\left \{ Q \in\mR^{k\times k} \hspace{.05in} | \hspace{.05in} 
\sum_{i=1}^{l}\tr\lp   \sqrt{F_i} (Q-I)^T F_i (Q-I)\sqrt{F_i} \rp >0 \in\mR^{n\times k} \right \}  \nonumber \\
& = & 
\left  \{ Q \in\mR^{k\times k} \hspace{.05in} | \hspace{.05in} 
\sum_{i=1}^{l}\tr\lp     (Q^T-I) F_i (Q-I)F_i \rp >0 \in\mR^{n\times k} \right   \} 
 \end{eqnarray}
and
 \begin{eqnarray}
 \label{eq:lbstr3a0}
\cQ \triangleq \left \{ Q \in\cQ_F \hspace{.05in} | \hspace{.05in} 
\lp X^{(1)}\rp^T X^{(2)} =Q,\lp X^{(1)}\rp^T X^{(1)} =\lp X^{(2)}\rp^T X^{(2)} =I, X^{(1)}, X^{(2)} \in\mR^{n\times k} \right  \} .
 \end{eqnarray}
For $Q\in\cQ$ consider the  set of replica pairs 
 \begin{eqnarray}
 \label{eq:lbstr3a00}
\bar{\cX} = \left \{ (X^{(1)},X^{(2)}) \hspace{.05in} | \hspace{.05in}
X^{(1)},X^{(2)}\in\mR^{n\times k}, \lp X^{(1)}\rp^T X^{(1)}=\lp X^{(2)}\rp^T X^{(2)}=I,  \lp X^{(1)}\rp^T X^{(2)}= Q \right  \} .
  \end{eqnarray}
  We then set 
 \begin{eqnarray}
 \label{eq:lbstr3a1}
 \zeta_i^{(1)}(X^{(j)}) & = &  \tr\lp  \lp X^{(j)}\rp^T G_i X^{(j)} F_i \rp
  \nonumber \\
 \zeta_i^{(2)}(X^{(j)}) & = & \tr(G_i^{(1)}  X^{(j)} F_i) 
 , 
 \end{eqnarray} 
 and associate to $\bar{\cX}$ the following $Q$-overlapped replicated system
 \begin{equation}
 \label{eq:lbstr3}
 \mbox{ \textbf{2-$Q$-rep:} } \hspace{.03in} D^{(2)}(t)=  \max_{(X^{(1)},X^{(2)})\in\bar{\cX}}   \sum_{j=1}^{2} \lp \sqrt{t}\sum_{i=1}^{l} \zeta_i^{(1)} (X^{(j)})
 +\sqrt{1-t}  \sqrt{2} \sum_{i=1}^{l} \zeta_i^{(2)}(X^{(j)})  \rp .  
   \end{equation}
The above principle (which basically states that the \emph{nontrivially overlapped replicated system cannot double the free energy}) practically means that for any $t\in[0,1)$ and $Q\in\cQ$, we must have
 \begin{eqnarray}
 \label{eq:lbstr4}
  \mbox{ \textbf{2-$Q$-rep} } < \mbox{ $\mathbf{2}\times$(\textbf{1-rep}) } .  
   \end{eqnarray}
Switching to mathematical terrain, the above implies that for any $t\in[0,1)$
 \begin{eqnarray}
 \label{eq:lbstr5}
  \mbox{ \textbf{2-$Q$-rep} } < \mbox{ $\mathbf{2}\times$(\textbf{1-rep}) } 
  \quad\quad
\Longleftrightarrow
  \quad\quad
  \min_{Q\in\cQ} \lp 
  \lim_{n\rightarrow \infty} \frac{2}{\sqrt{n}} \mE D(0) 
  -
   \lim_{n\rightarrow \infty} \frac{1}{\sqrt{n}} \mE D^{(2)}(t) 
\rp >0.  
   \end{eqnarray}
Similarly to \cite{Stojniccovmat26}, showing (\ref{eq:lbstr5}) establishes complete $k$-fold matrix analogues to Theorem 2.4 in \cite{TalSK06} and Theorem 5.2 in \cite{TalSph06}. The only additional thing that one has to ensure is that $Q$-overlap is adequately chosen. Following the derivations of 
\cite{Stojnicgscomp16,Stojnicgscompyx16,TalSph06,TalSK06}, the critical overlap is determined  precisely  as the one that ensures nonzero value of the quantity in (\ref{eq:mr5}) (for zero the interpolating contribution is nonexistent). This is effectively subsumed in the definition of $\cQ$ in (\ref{eq:lbstr3a0}). Since $k$ and $l$ are fixed they don't impact concentrations and the remaining parts of the \cite{TalSph06,TalSK06} methodologies automatically extend and ensure $ \lim_{d\rightarrow \infty} \frac{1}{\sqrt{n}}\mE D(1)= \lim_{d\rightarrow \infty} \frac{1}{\sqrt{n}}\mE D(0)$.  Practically speaking, one ultimately has
 \begin{equation}
 \label{eq:lbstr6}
 \min_{Q\in\cQ} \lp 
  \lim_{n\rightarrow \infty} \frac{2}{\sqrt{n}} \mE D(0) 
  -
   \lim_{n\rightarrow \infty} \frac{1}{\sqrt{n}} \mE D^{(2)}(t) 
\rp >0 
\hspace{.04in}\Longleftrightarrow \hspace{.04in}
\lim_{n\rightarrow \infty} \frac{1}{\sqrt{n}} \mE D(1)= \lim_{n\rightarrow \infty} \frac{1}{\sqrt{n}} \mE D(0).  
   \end{equation}

\subsubsection{Establishing $  \mbox{ \textbf{2-$Q$-rep} } < \mbox{ $\mathbf{2}\times$(\textbf{1-rep}) } $}
\label{sec:2rep1rep}

To show that the righthand side of (\ref{eq:lbstr5}) indeed holds (which then implies that $  \mbox{ \textbf{2-$Q$-rep} } < \mbox{ $\mathbf{2}\times$(\textbf{1-rep}) } $ principle is in place as well), we first prove the following  $\mE D^{(2)}(t)$ upper-bounding theorem.

\begin{theorem}
\label{thm:lbstrthm1}
For large $n\in\mN$ and fixed $k,l\in\mN$, let $G_{i}\in\mR^{n\times n}$, $G_{i}^{(1,1)},G_{i}^{(1,2)},G_{i}^{(1)}\in\mR^{k\times n},i=1,\dots,l$ be independent  standard normal matrices. Let $R$, $A_i$,  and $F_i$ be as in  Theorem \ref{thm:thm1}. Also, let $U_iD_iU_i^T=F_i$ be the eigen-decomposition of $F_i$ ($U_i$ is unitary and $D$ is diagonal). For  $t\in(0,1)$ and $Q\in\cQ$, let $D^{(2)}(t)$   be as in (\ref{eq:lbstr3a1}) and  (\ref{eq:lbstr3}). Set $G_i^{(1,3)} = Q G_i^{(1,1)} + \sqrt{I-Q^TQ}G_i^{(1,2)}$, 
 \begin{eqnarray}
 \label{lbstrthm1eq0}
 \bar{\zeta}_i^{(1,1)}(X^{(1)}) & = & \tr(G_i^{(1,1)}  X^{(1)} F_i) 
 \nonumber \\
 \bar{\zeta}_i^{(1,3)}(X^{(2)}) & = & \tr(G_i^{(1,3)}  X^{(2)} F_i) , 
 \end{eqnarray} 
and
\begin{equation}
   \label{eq:lbstrthm1eq1}
\cG_{D_u} (X^{(1)},X^{(2)}) =    
\sqrt{2} \lp
\sqrt{t} \sum_{i=1}^{l}    \bar{\zeta}_i^{(1,1)}(X^{(1)})
+\sqrt{t} \sum_{i=1}^{l}    \bar{\zeta}_i^{(1,3)}(X^{(2)})
+\sqrt{1-t} \sum_{j=1}^{2}  \sum_{i=1}^{l}   \zeta_i^{(2)}(X^{(j)})
\rp .
\end{equation} 
Let
\begin{equation}
   \label{eq:lbstrthm1eq1a0}
L^{(2)}(t)=  \max_{(X^{(1)},X^{(2)})\in\bar{\cX}}  
 \cG_{D_u} (X^{(1)},X^{(2)}) .
\end{equation} 
 One then has  
\begin{eqnarray}
   \label{eq:lbstrthm1eq2}
\lim_{n\rightarrow \infty} \frac{1}{\sqrt{n}} \mE D^{(2)}(t) 
\leq 
\lim_{n\rightarrow \infty} \frac{1}{\sqrt{n}} \mE L^{(2)}(t) .
\end{eqnarray}
\end{theorem}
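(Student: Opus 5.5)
The plan is to mimic the proof of Theorem \ref{thm:thm1}: compare two centered Gaussian processes indexed by the replica pairs $(X^{(1)},X^{(2)})\in\bar{\cX}$ via Theorem \ref{thm:Gordonpos1}, with the Slepian direction chosen so that the max of the $D^{(2)}$-type process is dominated by that of $\cG_{D_u}$. Concretely, on $\bar{\cX}$ define
\begin{equation*}
\cG_D(X^{(1)},X^{(2)}) = \sum_{j=1}^{2}\lp \sqrt{t}\sum_{i=1}^{l}\zeta_i^{(1)}(X^{(j)}) + \sqrt{1-t}\sqrt{2}\sum_{i=1}^{l}\zeta_i^{(2)}(X^{(j)})\rp + \sqrt{t}\sum_{i=1}^{l}\|F_i\|_F\lp \bar{\g}_i^{(1)}+\bar{\g}_i^{(2)}\rp,
\end{equation*}
where the scalar normals $\bar{\g}^{(1)},\bar{\g}^{(2)}$ are independent of everything and chosen so that $\bar{\g}^{(1)}_i+\bar{\g}^{(2)}_i$ has the right variance. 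Since these additive terms do not depend on $(X^{(1)},X^{(2)})$, they vanish after taking $\mE\max$, so $\mE\max_{\bar{\cX}}\cG_D=\mE D^{(2)}(t)$.

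The first step is to compute both covariances for two index points $(X^{(1)},X^{(2)})$ and $(Y^{(1)},Y^{(2)})$ in $\bar{\cX}$. The $\sqrt{1-t}$ parts are identical in the two processes and cancel. For the $\sqrt{t}$ parts, the $G_i$ contributions give, exactly as in (\ref{eq:mr2a1}), the four cross terms $\tr(\sqrt{F_i}(Y^{(b)})^TX^{(a)}\sqrt{F_i}\sqrt{F_i}(X^{(a)})^TY^{(b)}\sqrt{F_i})$ for $a,b\in\{1,2\}$. For $\cG_{D_u}$ we use that $G_i^{(1,1)}$ and $G_i^{(1,3)}$ have $\mE G^{(1,1)}_i(G^{(1,3)}_i)^T$-type cross covariance equal to $Q$ (by the construction $G_i^{(1,3)}=QG_i^{(1,1)}+\sqrt{I-Q^TQ}G_i^{(1,2)}$, each being marginally standard). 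The resulting cross terms are $2\tr(F_i(X^{(a)})^TY^{(a)}F_i)$ for the diagonal pairs, and terms of the form $2\tr(F_iQ^{\pm}\ldots F_i)$ for the off-diagonal pairs, mirroring (\ref{eq:mr2a2}).

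The second step is to complete the square, as in (\ref{eq:mr5}), to show
\begin{equation*}
\mE\cG_D\cG_D - \mE\cG_{D_u}\cG_{D_u} = t\sum_{i=1}^{l}\Big\|\sum_{a}\lp \sqrt{F_i}(X^{(a)})^TY^{(a)}\sqrt{F_i}-F_i\rp\Big\|_F^2\text{-type sums}\;\geq 0.
\end{equation*}
On the diagonal $(X^{(a)})=(Y^{(a)})$ this difference equals zero, using $(X^{(a)})^TX^{(a)}=I$, $(X^{(1)})^TX^{(2)}=Q$, and the matching choice of $Q$ in $G^{(1,3)}_i$. Theorem \ref{thm:Gordonpos1}, applied with $Y\leftrightarrow\cG_D$ and $X\leftrightarrow\cG_{D_u}$, then gives $\mE D^{(2)}(t)\leq \mE L^{(2)}(t)$. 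Dividing by $\sqrt{n}$ and letting $n\to\infty$ yields (\ref{eq:lbstrthm1eq2}).

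The main obstacle is the mixed terms $a\neq b$. There the $G_i$-covariance involves $(Y^{(2)})^TX^{(1)}$, while the $\cG_{D_u}$-covariance involves $Q$ times $(X^{(1)})^TY^{(1)}$ or similar. So it is not obvious that the difference is a sum of squares, nor that the variances match exactly rather than only up to $\bar{\g}$ adjustments. I would first try to write the full $2\times 2$ block difference as $\tr(M_i^TM_i)$ with $M_i=\sqrt{F_i}\,[\text{block of overlaps}]\,\sqrt{F_i}-[\text{block of }F_i,\ QF_i\text{-type terms}]$. If exact nonnegativity fails for arbitrary pairs, I would fall back on the fact that the off-diagonal covariance mismatch is a deterministic function of $Q$ on $\bar{\cX}$. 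This mismatch can then be absorbed by adding $X$-independent Gaussian terms, in the same way the $\|F_i\|_F\bar{\g}_i$ terms were used in Theorem \ref{thm:thm1}. Such terms affect neither maximum, up to the limit $n\to\infty$ where only $O(1)$ corrections arise and vanish after the $1/\sqrt{n}$ scaling.
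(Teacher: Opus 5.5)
Your architecture matches the paper's: a Slepian/Gordon comparison (Theorem \ref{thm:Gordonpos1}) on $\bar{\cX}$ between $\cG_D$ (the $D^{(2)}(t)$ objective plus an index-free Gaussian) and $\cG_{D_u}$, with the $\sqrt{1-t}$ parts cancelling. However, the one nontrivial step is left open: showing the covariance difference is nonnegative, with equality on the diagonal. Your description of the obstacle is also incorrect. The mixed covariance of $\cG_{D_u}$ does not involve $(X^{(1)})^TY^{(1)}$. By the construction of $G_i^{(1,3)}$,
$$\mE\,\bar{\zeta}_i^{(1,1)}(X^{(1)})\bar{\zeta}_i^{(1,3)}(Y^{(2)})=\tr(F_i(X^{(1)})^TY^{(2)}F_iQ),$$
which contains the same cross-overlap as the $G_i$-term $\tr(F_i(X^{(1)})^TY^{(2)}F_i(Y^{(2)})^TX^{(1)})$. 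This is the missing idea: each of the four blocks $(a,b)$ completes to its own square, as in (\ref{eq:lbstrmr2b5}),
\begin{align*}
&\mE\,\cG_D(X^{(1)},X^{(2)})\cG_D(Y^{(1)},Y^{(2)})-\mE\,\cG_{D_u}(X^{(1)},X^{(2)})\cG_{D_u}(Y^{(1)},Y^{(2)})\\
&\quad =t\sum_{i=1}^{l}\Big(\sum_{a=1}^{2}\|\sqrt{F_i}(Y^{(a)})^TX^{(a)}\sqrt{F_i}-F_i\|_F^2
+\|\sqrt{F_i}((Y^{(2)})^TX^{(1)}-Q)\sqrt{F_i}\|_F^2\\
&\qquad\quad +\|\sqrt{F_i}((Y^{(1)})^TX^{(2)}-Q^T)\sqrt{F_i}\|_F^2\Big).
\end{align*}
This holds provided the index-free Gaussian has variance $t\sum_{i=1}^{l}(2\|F_i\|_F^2+2\|\sqrt{F_i}Q\sqrt{F_i}\|_F^2)$.

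Your term $\sqrt{t}\sum_i\|F_i\|_F(\bar{\g}_i^{(1)}+\bar{\g}_i^{(2)})$, with independent standard normals, supplies only $2t\sum_i\|F_i\|_F^2$. The $Q$-dependent part, which comes precisely from the mixed blocks, is missing, so the variances would not match. For the same reason, your displayed form with target $F_i$ in every term is not the right identity.

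Two further details matter here. First, the mixed squares vanish at $Y=X$ only if the overlap orientation matches the correlation in $G_i^{(1,3)}$: with $G_i^{(1,3)}=QG_i^{(1,1)}+\cdots$ you need $(X^{(1)})^TX^{(2)}=Q^T$, as in (\ref{eq:lbstrmr1a0}). Second, $G_i^{(1,3)}$ as written has column covariance $I+QQ^T-Q^TQ$, so your claim that it is marginally standard holds only for normal $Q$. Replacing $\sqrt{I-Q^TQ}$ by $\sqrt{I-QQ^T}$ fixes this, and the $(2,2)$ block needs it.

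Your fallback cannot work. For distinct index points, the covariances depend on the cross-overlaps $(X^{(a)})^TY^{(b)}$, which vary over $\bar{\cX}\times\bar{\cX}$; only the variances (the case $Y=X$) are functions of $Q$ alone. An index-free Gaussian term of variance $c^2$ adds the same $c^2$ to every covariance entry, variances included. Once $c$ is fixed by variance matching, there is no freedom left to repair an off-diagonal mismatch of indefinite sign. The proof therefore stands or falls with the four-square identity above, which the paper verifies and your proposal does not.
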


\begin{proof} We study two centered Gaussian processes indexed by an array $\cX = \{X^{(1)},X^{(2)}\}$
  \begin{align}
\label{eq:lbstrmr1}
 \cG_D (\cX)   \triangleq   \cG_D (X^{(1)},X^{(2)})  
   & =    \sum_{j=1}^{2} \lp \sqrt{t}\sum_{i=1}^{l} \zeta_i^{(1)}(X^{(j)})
 +\sqrt{1-t}  \sqrt{2} \sum_{i=1}^{l} \zeta_i^{(2)}(X^{(j)})  \rp  
\nonumber \\
&  \hspace{.15in} + \sqrt{t}  \sqrt{2\| F_i\|_F^2+ 2\| \sqrt{F_i}Q\sqrt{F_i}\|_F^2}  \bar{\g} 
  \nonumber   \\
 \cG_{D_u} (\cX)  \triangleq   \cG_{D_u} (X^{(1)},X^{(2)})  
 & =
 \sqrt{2} \lp
\sqrt{t} \sum_{i=1}^{l}    \bar{\zeta}_i^{(1,1)} (X^{(1)})
+\sqrt{t} \sum_{i=1}^{l}    \bar{\zeta}_i^{(1,3)} (X^{(2)})
+\sqrt{1-t} \sum_{j=1}^{2}  \sum_{i=1}^{l}    \zeta_i^{(2)}(X^{(j)})
\rp   .\nonumber \\
  \end{align}
Let $\cX^{(a)}=\{ X^{(a_1)}, X^{(a_2)}\}$ and $\cX^{(b)}=\{ X^{(b_1)}, X^{(b_2)}\}$ be two arrays such that 
  \begin{eqnarray}
\label{eq:lbstrmr1a0}
\lp X^{(a_i)} \rp^T X^{(a_i)} =\lp X^{(b_i)} \rp^T X^{(b_i)} =I,i=1,2, \quad  \mbox{and}\quad 
 \lp X^{(a_1)}\rp^T X^{(a_2)}=\lp X^{(b_1)}\rp^T X^{(b_2)}= Q^T .
\end{eqnarray}
Then we have 
  \begin{eqnarray}
\label{eq:lbstrmr2}
\mE \cG (\cX^{(a)})\cG (\cX^{(b)})   & =  &  \sum_{i=1}^{l} \lp t\bar{\phi}_i + 2(1-t)\bar{\phi}_{i,2} 
+ 2t\| F_i\|_F^2+ 2t\| \sqrt{F_i}Q\sqrt{F_i}\|_F^2 \rp
\nonumber   \\
\mE \cG_u (\cX^{(a)})\cG_u (\cX^{(b)})  & =  & 2t\sum_{i=1}^{l} \bar{\theta}_i
+  2(1-t)\sum_{i=1}^{l} \bar{\phi}_{i,2} ,
  \end{eqnarray}
  where
  \begin{eqnarray}
\label{eq:lbstrmr2b0}
 \bar{\phi}_i & = & \mE \lp \zeta_i^{(1)}(X^{(a_1)}) + \zeta_i^{(1)}(X^{(a_2)}) \rp
\lp \zeta_i^{(1)}(X^{(b_1)}) + \zeta_i^{(1)}(X^{(b_2)}) \rp
  \nonumber \\
  \bar{\phi}_{i,2} & = & \mE \lp \zeta_i^{(2)}(X^{(a_1)}) + \zeta_i^{(2)}(X^{(a_2)}) \rp
\lp \zeta_i^{(2)}(X^{(b_1)}) + \zeta_i^{(2)}(X^{(b_2)}) \rp
 \nonumber \\
  \bar{\theta}_{i} & = & \mE \lp \bar{\zeta}_i^{(1,1)}(X^{(a_1)}) + \bar{\zeta}_i^{(1,3)}(X^{(a_2)}) \rp
\lp \bar{\zeta}_i^{(1,1)}(X^{(b_1)}) + \bar{\zeta}_i^{(1,3)}(X^{(b_2)}) \rp . 
 \end{eqnarray}
We then find
  \begin{eqnarray}
\label{eq:lbstrmr2b1}
 \bar{\phi}_i & = &  
 \bar{\phi}_{i,1} +  \bar{\phi}_{i,2} +  \bar{\phi}_{i,3} +  \bar{\phi}_{i,4},
\end{eqnarray}
 where
 \begin{eqnarray}
\label{eq:lbstrmr2b2}
 \bar{\phi}_{i,1} & = & \mE \tr\lp \lp X^{(a_1)}\rp^T G_i X^{(a_1)} F_i \rp 
   \tr\lp \lp X^{(b_1)}  \rp^T G_i X^{(b_1)} F_i \rp 
\nonumber \\
& = & 
\tr\lp  \sqrt{F_i} \lp X^{(b_1)}\rp^TX^{(a_1)}  \sqrt{F_i}  \sqrt{F_i} \lp X^{(a_1)}\rp^TX^{(b_1)}   \sqrt{F_i}   \rp 
 \nonumber \\
 \bar{\phi}_{i,2} & = & \mE \tr\lp \lp X^{(a_2)}\rp^T G_i X^{(a_2)} F_i \rp 
   \tr\lp \lp X^{(b_2)}  \rp^T G_i X^{(b_2)} F_i \rp 
\nonumber \\
& = & 
\tr\lp  \sqrt{F_i} \lp X^{(b_2)}\rp^TX^{(a_2)}  \sqrt{F_i}  \sqrt{F_i} \lp X^{(a_2)}\rp^TX^{(b_2)}   \sqrt{F_i}   \rp 
 \nonumber \\
 \bar{\phi}_{i,3} & = & \mE \tr\lp \lp X^{(a_1)}\rp^T G_i X^{(a_1)} F_i \rp 
   \tr\lp \lp X^{(b_2)}  \rp^T G_i X^{(b_2)} F_i \rp 
\nonumber \\
& = & 
\tr\lp  \sqrt{F_i} \lp X^{(a_1)}\rp^TX^{(b_2)}  \sqrt{F_i}  \sqrt{F_i} \lp X^{(b_2)}\rp^TX^{(a_1)}   \sqrt{F_i}   \rp 
 \nonumber \\
 \bar{\phi}_{i,4} & = & \mE \tr\lp \lp X^{(a_2)}\rp^T G_i X^{(a_2)} F_i \rp 
   \tr\lp \lp X^{(b_1)}  \rp^T G_i X^{(b_1)} F_i \rp
   \nonumber \\
& = & 
\tr\lp  \sqrt{F_i} \lp X^{(b_1)}\rp^TX^{(a_2)}  \sqrt{F_i}  \sqrt{F_i} \lp X^{(a_2)}\rp^TX^{(b_1)}   \sqrt{F_i}   \rp . 
  \end{eqnarray}

After noting that the elements of $G^{(1,3)}$ independent standard normals, we also have
  \begin{eqnarray}
\label{eq:lbstrmr2b3}
 \bar{\theta}_i & = &  
 \bar{\theta}_{i,1} +  \bar{\theta}_{i,2} +  \bar{\theta}_{i,3} +  \bar{\theta}_{i,4},
\end{eqnarray}
 where
  \begin{eqnarray}
\label{eq:lbstrmr2b4}
 \bar{\theta}_{i,1} & =  &  \mE \tr(G_i^{(1,1)}  X^{(a_1)} F_i)  \tr(G_i^{(1,1)}  X^{(b_1)} F_i) 
=\tr\lp \sqrt{F_i} \lp X^{(b_1)}\rp^TX^{(a_1)}  \sqrt{F_i} F_i\rp
\nonumber   \\
\bar{\theta}_{i,2} & =  &  \mE \tr(G_i^{(1,3)}  X^{(a_2)} F_i)  \tr(G_i^{(1,3)}  X^{(b_2)} F_i) 
= \tr\lp \sqrt{F_i} \lp X^{(b_2)}\rp^TX^{(a_2)}  \sqrt{F_i} F_i\rp 
\nonumber   \\
\bar{\theta}_{i,3} & =  &  \mE \tr(G_i^{(1,1)}  X^{(a_1)} F_i)  \tr(G_i^{(1,3)}  X^{(b_2)} F_i) 
= \tr\lp \sqrt{F_i} \lp X^{(a_1)}\rp^TX^{(b_2)}  \sqrt{F_i} \sqrt{F_i} Q \sqrt{F_i}\rp 
\nonumber   \\
\bar{\theta}_{i,2} & =  &  \mE \tr(G_i^{(1,3)}  X^{(a_2)} F_i)  \tr(G_i^{(1,1)}  X^{(b_1)} F_i) 
= \tr\lp \sqrt{F_i} \lp X^{(a_2)}\rp^TX^{(b_1)}  \sqrt{F_i}  \sqrt{F_i} Q^T \sqrt{F_i} \rp .
  \end{eqnarray}
One then observes
  \begin{eqnarray}
\label{eq:lbstrmr2b5}
  \bar{\phi}_{i,1}  -2
 \bar{\theta}_{i,1}  + \tr(F_iF_i)
& = & \| \sqrt{F_i} \lp X^{(b_1)}\rp^TX^{(a_1)}  \sqrt{F_i} -F_i\|_F^2 \geq 0
 \nonumber \\
  \bar{\phi}_{i,2}  -2
 \bar{\theta}_{i,2}  + \tr(F_iF_i)
& = & \| \sqrt{F_i} \lp X^{(b_2)}\rp^TX^{(a_2)}  \sqrt{F_i} -F_i\|_F^2 \geq 0
 \nonumber \\
  \bar{\phi}_{i,3}  -2
 \bar{\theta}_{i,3}  +  \tr \lp (\sqrt{F_i} Q \sqrt{F_i} )^T (\sqrt{F_i} Q \sqrt{F_i} )\rp
& = & \| \sqrt{F_i} \lp X^{(b_2)}\rp^TX^{(a_1)}  \sqrt{F_i} -\sqrt{F_i}Q\sqrt{F_i}\|_F^2 \geq 0
 \nonumber \\
  \bar{\phi}_{i,4}  -2
 \bar{\theta}_{i,4}  +  \tr \lp (\sqrt{F_i} Q \sqrt{F_i} )^T (\sqrt{F_i} Q \sqrt{F_i} )\rp
& = & \| \sqrt{F_i} \lp X^{(b_1)}\rp^TX^{(a_2)}  \sqrt{F_i} -\sqrt{F_i}Q^T\sqrt{F_i}\|_F^2 \geq 0. \nonumber \\
\end{eqnarray}
Moreover,
  \begin{eqnarray}
\label{eq:lbstrmr2b6}
\bar{\phi}_i -2\bar{\theta}_i 
+2\|F_i\|_F^2
+2\|\sqrt{F_i} Q \sqrt{F_i} \|_F^2
& = &  
 \bar{\phi}_{i,1} +  \bar{\phi}_{i,2} +  \bar{\phi}_{i,3} +  \bar{\phi}_{i,4},
-2(
 \bar{\theta}_{i,1} +  \bar{\theta}_{i,2} +  \bar{\theta}_{i,3} +  \bar{\theta}_{i,4}),
\nonumber \\
& & +2\tr(F_iF_i)
+2\tr \lp (\sqrt{F_i} Q \sqrt{F_i} )^T (\sqrt{F_i} Q \sqrt{F_i} )\rp 
\nonumber \\
& \geq & 0.
\end{eqnarray}
Subtracting  second from the first equation in (\ref{eq:lbstrmr2}), we arrive at
   \begin{eqnarray}
\label{eq:lbstrmr2b7}
\mE \cG (\cX^{(a)})\cG (\cX^{(b)}) -
\mE \cG_u (\cX^{(a)})\cG_u (\cX^{(b)}) 
  & =  &  \sum_{i=1}^{l} \lp t\bar{\phi}_i + 2(1-t)\bar{\phi}_{i,2} 
+ 2t\| F_i\|_F^2+ 2t\| \sqrt{F_i}Q\sqrt{F_i}\|_F^2 \rp
\nonumber   \\
 &   & -2t\sum_{i=1}^{l} \bar{\theta}_i
-  2(1-t)\sum_{i=1}^{l} \bar{\phi}_{i,2}
\nonumber \\
& = & t\sum_{i=1}^{l} \lp \bar{\phi}_i   -2\bar{\theta}_i
+ 2\| F_i\|_F^2+ 2\| \sqrt{F_i}Q\sqrt{F_i}\|_F^2 \rp
\nonumber \\
& \geq  &  0 ,
  \end{eqnarray}
where the last inequality follows from (\ref{eq:lbstrmr2b6}). Finally, when $b=a$  all four inequalities in (\ref{eq:lbstrmr2b5}) become equalities (the first two since, by (\ref{eq:lbstrmr1a0}),  $ \lp X^{(a_1)}\rp^TX^{(a_1)}= \lp X^{(a_2)} \rp^TX^{(a_2)}=I$ and the last two since, by (\ref{eq:lbstrmr1a0}),  $ \lp X^{(a_1)}\rp^TX^{(a_2)}=Q$). This then immediately implies equalities in  (\ref{eq:lbstrmr2b6})  and (\ref{eq:lbstrmr2b7}) as well.
 
 After noting  $X\leftrightarrow\cG_{D_u}$ correspondence, the above is then sufficient to apply  Theorem \ref{thm:Gordonpos1}   to processes $\cG_D(\cdot)$ and  $\cG_{D_u}(\cdot)$. After doing so, pne obtains
\begin{align}\label{eq:lbstrmt5a1a0}
&  &\mE \max_{\cX^{(a_1,a_2)}} \cG_D(\cX)  & \leq \mE \max_{\cX^{(a_1,a_2)}} \cG_{D_u}(\cX)
\nonumber \\
\Longleftrightarrow & & \mE \max_{(X^{(1)},X^{(2)})\in\bar{\cX}} \cG_D (X^{(1)},X^{(2)}) & \leq \mE \max_{(X^{(1)},X^{(2)})\in\bar{\cX}} \cG_{D_u} (X^{(1)},X^{(2)}) 
 \nonumber \\
\Longleftrightarrow & & 
\lim_{n\rightarrow \infty}
\frac{1}{\sqrt{n}}\mE D^{(2)}(t)  
+  \lim_{n\rightarrow \infty}
\frac{\sqrt{t}}{\sqrt{n}}\mE \sum_{i=1}^{l} \sqrt{2\| F_i\|_F^2+ 2\| \sqrt{F_i}Q\sqrt{F_i}\|_F^2}  \bar{\g} 
& \leq  \lim_{n\rightarrow \infty}
 \frac{1}{\sqrt{n}}   \mE L^{(2)}(t)
 \nonumber \\
\Longleftrightarrow & & 
\lim_{n\rightarrow \infty}
\frac{1}{\sqrt{n}}\mE D^{(2)}(t)  
 & \leq  \lim_{n\rightarrow \infty}
 \frac{1}{\sqrt{n}}   \mE L^{(2)}(t) \end{align}
which matches (\ref{eq:lbstrthm1eq2}) and completes the theorem's proof. 
\end{proof}

We also observe that since $k$ and $l$ are fixed, quantities on the left and righthand side of the second to last inequality actually concentrate as well.

Recalling on (\ref{eq:lbstr2}), (\ref{eq:lbstr2a0}), and (\ref{eq:lbstrthm1eq2}), we note that the condition
in (\ref{eq:lbstr5}) and  (\ref{eq:lbstr6}) will be met if for any $t<1$
\begin{eqnarray}
   \label{eq:lbstrmt5a1a1}
\lim_{n\rightarrow \infty} \frac{1}{\sqrt{n}} \mE L^{(2)}(t) < 2 \lim_{n\rightarrow \infty}\frac{1}{\sqrt{n}} \mE L(t).
\end{eqnarray}
We characterize $\lim_{n\rightarrow \infty} \frac{1}{\sqrt{n}} \mE L^{(2)}(t) $ next.

\subsubsection{Characterization of $\lim_{n\rightarrow \infty} \frac{1}{\sqrt{n}} \mE L^{(2)}(t) $}
\label{sec:l2}

 Since $L^{(2)}(t)$ is clearly of key importance,  we study it in detail below. To that end, we recall  
 (\ref{eq:lbstrthm1eq1}) and  (\ref{eq:lbstrthm1eq1a0})
\begin{equation}
   \label{eq:l2lbstrthm1eq1}
L^{(2)}(t)= \sqrt{2} \lp
\sqrt{t} \sum_{i=1}^{l}    \bar{\zeta}_i^{(1,1)}(X^{(1)})
+\sqrt{t} \sum_{i=1}^{l}    \bar{\zeta}_i^{(1,3)}(X^{(2)})
+\sqrt{1-t} \sum_{j=1}^{2}  \sum_{i=1}^{l}   \zeta_i^{(2)}(X^{(j)})
\rp .
\end{equation} 
For any $i\in\{1,2,\dots,l\}$ let each of $G_i^{(x,1)}\in\mR^{n\times k}$  and  $G_i^{(x,2)}\in\mR^{n\times k}$ have independent standard normal components. Let $\g_j^{(x,1)}$ be the $j$-th column of $G_i^{(x,1)}$ and analogously $\g_j^{(x,2)}$ be the $j$-th column of $G_i^{(x,2)}$. Let also
\begin{equation}
   \label{eq:l2lbstrthm1eq1a0}
\mE \g_i^{(x,1)}\lp\g_i^{(x,2)}\rp^T = (1-t)I + tQ \triangleq Q_x, 1\leq i\leq n. 
 \end{equation} 
 Also, let pairs $(G_r^{(x,1)}, G_r^{(x,2)})$ and  $(G_s^{(x,1)}, G_s^{(x,2)})$ be independent of each other for any $r\neq s$.
 After setting
  \begin{eqnarray}
   \label{eq:l2lbstrthm1eq1a0a0}
 \tilde{\zeta}_i^{(1,1)}(X^{(1)}) & = & \tr(G_i^{(x,1)}  X^{(1)} F_i) 
 \nonumber \\
 \tilde{\zeta}_i^{(1,3)}(X^{(2)}) & = & \tr(G_i^{(x,2)}  X^{(2)}  F_i) , 
 \end{eqnarray} 
we have the following as a statistical equivalent to (\ref{eq:lbstrthm1eq1}) 
\begin{eqnarray}
   \label{eq:l2lbstrthm1eq1a1}
L^{(2)}(t) 
& = & \sqrt{2} \max_{(X^{(1)},X^{(2)})\in\bar{\cX}}  
\sum_{i=1}^{l} \lp \tilde{\zeta}_i^{(1,1)}(X^{(1)})  + \tilde{\zeta}_i^{(1,3)}(X^{(2)})  \rp
\nonumber \\
& = & \sqrt{2} \max_{(X^{(1)},X^{(2)})\in\bar{\cX}}  
\sum_{i=1}^{l} \lp  \tr(G_i^{(x,1)}  X^{(1)}  F_i)  + \tr(G_i^{(x,2)}  X^{(2)}  F_i) \rp.
\nonumber \\
& = & \sqrt{2} \max_{(X^{(1)},X^{(2)})\in\bar{\cX}}  
\lp  \tr(Z_{1}^T  X^{(1)} )  + \tr( Z_{2}^T   X^{(2)}) \rp,
\end{eqnarray} 
where
\begin{eqnarray}
   \label{eq:l2lbstrthm1eq1a1a0}
Z_{1}^T  =   \sum_{i=1}^{l}  F_i G_i^{(x,1)}
 \quad \mbox{and}\quad 
Z_{2}^T  =   \sum_{i=1}^{l}  F_i G_i^{(x,2)}.
\end{eqnarray}
A combination of Gaussian symmetricity and simple algebraic transformations gives 
that (\ref{eq:l2lbstrthm1eq1a1}) can be rewritten as
\begin{eqnarray}
   \label{eq:l2hrd1}
L^{(2)}(t)= - \sqrt{2} \min_{(X^{(1)},X^{(2)})\in\bar{\cX}}  
\lp  \tr(Z_{1}^T  X^{(1)} )  + \tr( Z_{2}^T   X^{(2)}) \rp.
\end{eqnarray}
We then write the Lagrangian 
\begin{eqnarray}
   \label{eq:l2hrd2}
 \cL & = & \lp  \tr(Z_{1}^T  X^{(1)} )  + \tr( Z_{2}^T   X^{(2)}) \rp
  + \tr\lp \Gamma_1  \lp X^{(1)}\rp^TX^{(1)} \rp - \tr(\Gamma_1 )
 \nonumber \\
 & &  
  + \tr\lp \Gamma_2  \lp X^{(2)}\rp^TX^{(2)}\rp - \tr( \Gamma_2 )
  + \tr\lp \Lambda  \lp X^{(1)}\rp^TX^{(2)}\rp - \tr( \Lambda Q )
.
\end{eqnarray}
Combining  (\ref{eq:l2hrd1}) and  (\ref{eq:l2hrd2}) with duality, one further finds
\begin{eqnarray}
   \label{eq:l2hrd3}
 L^{(2)}(t)  = -\min_{X^{(1)},X^{(2)}}\max_{\Gamma_1=\Gamma_1^T,\Gamma_2=\Gamma_2^T,\Lambda} \cL
 \leq  - \max_{\Gamma_1=\Gamma_1^T,\Gamma_2=\Gamma_2^T,\Lambda}\min_{X^{(1)},X^{(2)}} \cL.
\end{eqnarray}
While it is not necessary due to symmetry, to make writing easier we consider $\Gamma_1=\Gamma_2=\Gamma$ and $\Lambda=\Lambda^T$ which further gives
\begin{eqnarray}
   \label{eq:l2hrd3a0}
 L^{(2)}(t)   
 \leq  - \max_{\Gamma_1=\Gamma_1^T,\Gamma_2=\Gamma_2^T,\Lambda}\min_{X^{(1)},X^{(2)}} \cL
  \leq  - \max_{\Gamma_1=\Gamma_1^T=\Gamma_2=\Gamma,\Lambda=\Lambda^T}\min_{X^{(1)},X^{(2)}} \cL.
\end{eqnarray}
After taking  $X^{(1)}$ and $X^{(2)}$ derivatives, we obtain
\begin{eqnarray}
   \label{eq:l2hrd4}
 \frac{d\cL}{dX^{(1)}} & =  & Z_{1} + 2X_i^{(1)}\Gamma   
 + X^{(2)}\Lambda
 \nonumber \\
 \frac{d\cL}{dX^{(2)}} & =  & Z_{2} + 2X_i^{(2)}\Gamma   
 + X^{(1)}\Lambda
.
\end{eqnarray}
Equalling  derivatives in (\ref{eq:l2hrd4}) to zero gives 
\begin{eqnarray}
   \label{eq:l2hrd4a1}
  Z_{1} + 2X^{(1)}\Gamma   
 + X^{(2)}\Lambda  & = & 0
 \nonumber \\
   Z_{2} + 2X^{(2)}\Gamma   
 + X^{(1)}\Lambda & = & 0
.
\end{eqnarray}
After summing and subtracting the above two equations, we find
\begin{eqnarray}
   \label{eq:l2hrd4a2}
-(Z_{1}+Z_{2}) & = &  (X^{(1)}+X^{(2)})(\Lambda+2\Gamma)
\nonumber \\
-(Z_{1}-Z_{2}) & = & (X^{(1)}-X^{(2)})(-\Lambda+2\Gamma).
\end{eqnarray}
Setting 
\begin{eqnarray}
   \label{eq:l2hrd4a2a0}
W_1 &  =  & \Lambda+2\Gamma
\nonumber \\
W_2 & =  & -\Lambda+2\Gamma,
\end{eqnarray}
allows to write
\begin{eqnarray}
   \label{eq:l2hrd4a2a1}
X^{(1)} + X^{(2)} & = & -(Z_{1}+Z_{2})W_1^{-1} 
\nonumber \\
X^{(1)} - X^{(2)} & = & -(Z_{1}-Z_{2})W_2^{-1} .
\end{eqnarray}
One then easily have
\begin{eqnarray}
   \label{eq:l2hrd4a2a2}
X^{(1)}  & = & \frac{1}{2}(-(Z_{1}+Z_{2})W_1^{-1} - (Z_{1}-Z_{2})W_2^{-1}  )  
\nonumber \\
X^{(2)}  & = &   \frac{1}{2}(-(Z_{1}+Z_{2})W_1^{-1} + (Z_{1}-Z_{2})W_2^{-1}   ) .
 \end{eqnarray}
For the above $X^{(1)}$ and  $X^{(2)}$, we find from (\ref{eq:l2hrd4a1}) 
\begin{eqnarray}
   \label{eq:l2hrd4a2a3}
  Z_{1}^TX^{(1)}& = & -2\Gamma \lp X^{(1)}\rp^T X^{(1)}  
 - \Lambda \lp X^{(2)} \rp^T X^{(1)} 
 \nonumber \\
  Z_{2}^TX^{(2)}& = & -2\Gamma \lp X^{(2)}\rp^T X^{(2)}  
 - \Lambda \lp X^{(1)} \rp^T X^{(2)} .
\end{eqnarray}
A combination of  (\ref{eq:l2hrd2}) and (\ref{eq:l2hrd4a2a3}) then gives
\begin{equation}
   \label{eq:l2hrd4a2a4}
\min_{X^{(1)},X^{(2)}}  \cL 
 = 
  - \tr\lp \Gamma  \lp X^{(1)}\rp^TX^{(1)} \rp 
  - \tr\lp \Gamma  \lp X^{(2)}\rp^TX^{(2)}\rp - 2\tr( \Gamma )
  - \tr\lp \Lambda  \lp X^{(2)}\rp^TX^{(1)}\rp - \tr( \Lambda Q ) ,
\end{equation}
 for $X^{(1)}$ and  $X^{(2)}$ from (\ref{eq:l2hrd4a2a2}). After plugging these $X^{(1)}$ and  $X^{(2)}$ values in (\ref{eq:l2hrd4a2a4}), we obtain
\begin{eqnarray}
   \label{eq:l2hrd4a2a5}
\min_{X^{(1)},X^{(2)}}  \cL 
 & = & 
  - \tr\lp \Gamma  \lp X^{(1)}\rp^TX^{(1)} \rp 
  - \tr\lp \Gamma  \lp X^{(2)}\rp^TX^{(2)}\rp - 2\tr( \Gamma )
  - \tr\lp \Lambda  \lp X^{(2)}\rp^TX^{(1)}\rp - \tr( \Lambda Q ) 
  \nonumber \\
 & = & 
  - \frac{1}{2}\tr\lp \Lambda  \lp X^{(2)}\rp^TX^{(1)}\rp  - \tr\lp \Gamma  \lp X^{(1)}\rp^TX^{(1)} \rp 
  \nonumber \\
 &  & 
  - \frac{1}{2}\tr\lp \Lambda  \lp X^{(2)}\rp^TX^{(1)}\rp   - \tr\lp \Gamma  \lp X^{(2)}\rp^TX^{(2)}\rp - 2\tr( \Gamma )
  - \tr( \Lambda Q ) 
  \nonumber \\
  & = & 
  \tr( Z_{1}^TX^{(1)})   +   \tr( Z_{2}^TX^{(2)})  
  - 2\tr( \Gamma )
  - \tr( \Lambda Q ) 
   \nonumber \\
  & = & 
 -\frac{1}{4} \tr ((Z_{1}+Z_2)^T(Z_1+Z_2)W_1^{-1}) - \frac{1}{4}\tr((Z_1-Z_2)^T(Z_1-Z_2)W_2^{-1})  -2\tr(\Gamma) - \tr(\Lambda Q) .\nonumber \\
\end{eqnarray}
 
The following theorem summarizes the above characterization of $\lim_{n\rightarrow \infty }\frac{1}{\sqrt{n}} \mE L^{(2)}(t)  $.

\begin{theorem}
 \label{thm:lbstrthm2}  
Assume the setup of Theorem \ref{thm:lbstrthm1}. Let $\bar{L}$ be as in (\ref{eq:hrd9b0}). Set 
\begin{eqnarray}
   \label{eq:l2thmchareq1}
Z_{1}^T  =   \sum_{i=1}^{l}  F_i G_i^{(x,1)}  
 \quad \mbox{and}\quad
 Z_{2}^T  =   \sum_{i=1}^{l}  F_i G_i^{(x,2)}   ,
\end{eqnarray}
and
\begin{eqnarray}
   \label{eq:l2thmchareq2}
W_1=\Lambda+2\Gamma \quad \mbox{and}\quad 
W_2= -\Lambda+2\Gamma.
\end{eqnarray}
Let
\begin{equation}
   \label{eq:lbstrthm2eq1a0}
\bar{L}^{(2,G)} \triangleq
\sqrt{2} \lp  \frac{1}{4} \tr ((Z_{1}+Z_2)^T(Z_1+Z_2)W_1^{-1}) + \frac{1}{4}\tr((Z_1-Z_2)^T(Z_1-Z_2)W_2^{-1})  +2\tr(\Gamma) + \tr(\Lambda Q) \rp.
\end{equation}
  One then has  
\begin{eqnarray}
   \label{eq:lbstrthm2eq2}
\lim_{n\rightarrow \infty } \frac{1}{\sqrt{n}} \mE D^{(2)}(t) 
 \leq  
\lim_{n\rightarrow \infty } \frac{1}{\sqrt{n}} \mE L^{(2)}(t) 
 \leq  
\lim_{n\rightarrow \infty }  \frac{1}{\sqrt{n}} \mE  \min_{\Gamma=\Gamma^T,\Lambda=\Lambda^T}  \bar{L}^{(2,G)} .
\end{eqnarray}
\end{theorem}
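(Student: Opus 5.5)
The inequality in Theorem \ref{thm:lbstrthm2} is a chain of two bounds, and I will prove them one at a time.

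\textbf{First inequality.} This is exactly (\ref{eq:lbstrthm1eq2}) of Theorem \ref{thm:lbstrthm1}, so nothing new is needed there.

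\textbf{Second inequality: reduction to the auxiliary problem.} First I replace $L^{(2)}(t)$ by its statistical equivalent (\ref{eq:l2lbstrthm1eq1a1}). The pair $(G_i^{(1,1)},G_i^{(1,3)})$ combines with the $\sqrt{1-t}$ parts coming from $G_i^{(1)}$. Their per-column cross-covariance is then $t Q+(1-t)I=Q_x$, up to the transpose convention fixed in (\ref{eq:lbstrmr1a0}). Matching covariances of these jointly Gaussian families shows that $L^{(2)}(t)$ has the same law as
\[
\sqrt{2}\max_{(X^{(1)},X^{(2)})\in\bar{\cX}}\lp \tr(Z_1^TX^{(1)})+\tr(Z_2^TX^{(2)})\rp,
\]
with $Z_1,Z_2$ as in (\ref{eq:l2thmchareq1}). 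Using the symmetry $(Z_1,Z_2)\to(-Z_1,-Z_2)$, I rewrite this as $-\sqrt{2}$ times the corresponding minimum, which is (\ref{eq:l2hrd1}).

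\textbf{Second inequality: weak duality.} Next I use Lagrangian weak duality. For any fixed symmetric $\Gamma$ and symmetric $\Lambda$, feasibility of $(X^{(1)},X^{(2)})$ makes the constraint terms in (\ref{eq:l2hrd2}) vanish. Hence
\[
\min_{\bar{\cX}}(\cdots)\ \geq\ \min_{X^{(1)},X^{(2)}\ \mathrm{unconstrained}}\cL .
\]
Restricting to $\Gamma_1=\Gamma_2=\Gamma$ and symmetric $\Lambda$ only weakens the bound, which gives (\ref{eq:l2hrd3a0}). The unconstrained minimum is finite exactly when $W_1=\Lambda+2\Gamma\succ0$ and $W_2=2\Gamma-\Lambda\succ0$; I interpret $\bar{L}^{(2,G)}=+\infty$ otherwise.

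\textbf{Second inequality: evaluating the inner minimum.} After the orthogonal change of variables $U=X^{(1)}+X^{(2)}$, $V=X^{(1)}-X^{(2)}$, the quadratic form splits into a $W_1$-part and a $W_2$-part. The stationary point is (\ref{eq:l2hrd4a2a2}), and completing the square reproduces (\ref{eq:l2hrd4a2a5}). I will verify this completion of squares directly rather than through the derivative manipulations, since that is where sign and factor errors are likely. Multiplying by $-\sqrt{2}$ gives
\[
L^{(2)}(t)\ \leq\ \bar{L}^{(2,G)}(\Gamma,\Lambda)
\]
pointwise, for every admissible $(\Gamma,\Lambda)$. Therefore $L^{(2)}(t)\leq \min_{\Gamma,\Lambda}\bar{L}^{(2,G)}$ almost surely. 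Taking expectations, dividing by $\sqrt{n}$ and letting $n\to\infty$ gives the second inequality.

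\textbf{Main obstacle.} The delicate point is the legitimacy of the limits. I need $\frac{1}{\sqrt{n}}\mE\min_{\Gamma,\Lambda}\bar{L}^{(2,G)}$ to be finite and its limit to exist. My plan is to use the law of large numbers: $\frac{1}{n}Z_a^TZ_b$ converges to deterministic $k\times k$ matrices built from $\sum_i F_iQ_x F_i$-type terms. After rescaling $\Gamma,\Lambda$ by $\sqrt{n}$ as in (\ref{eq:hrd9}), the minimum becomes a deterministic finite-dimensional convex program. Since $k$ and $l$ are fixed, Gaussian concentration together with uniform control of $\min_{\Gamma,\Lambda}$ over compact sets of $(W_1,W_2)$ bounded away from singularity justifies exchanging limit, expectation and minimum. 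This also shows that the limit on the right-hand side is well defined.
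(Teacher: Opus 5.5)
Your proposal is correct and follows essentially the paper's own route. The first inequality is inherited from Theorem \ref{thm:lbstrthm1}, and the second comes from the statistical equivalent (\ref{eq:l2lbstrthm1eq1a1}), Lagrangian weak duality restricted to $\Gamma_1=\Gamma_2=\Gamma$ and symmetric $\Lambda$, and explicit evaluation of the inner quadratic minimum. Your $U=X^{(1)}+X^{(2)}$, $V=X^{(1)}-X^{(2)}$ completion of squares is a cleaner version of the paper's sum/difference of the stationarity equations. It confirms the final line of (\ref{eq:l2hrd4a2a5}), whose third line should carry a factor $\frac{1}{2}$.

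Your convention $\bar{L}^{(2,G)}=+\infty$ unless $W_1,W_2\succ 0$ is a worthwhile tightening, since literally minimizing the displayed formula over all symmetric $\Gamma,\Lambda$ gives $-\infty$. The limit-exchange discussion is more than this statement needs, because your almost sure bound already yields $\mE L^{(2)}(t)\leq \mE\min_{\Gamma,\Lambda}\bar{L}^{(2,G)}$ for every $n$.
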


\begin{proof}
  Follows from the above discussion.
\end{proof}

\begin{corollary}
  \label{thm:lbstrthm3}  
  Assume the setup of Theorem \ref{thm:lbstrthm1}. Let $\bar{L}$ be as in (\ref{eq:hrd9b0}). Set 
 \begin{eqnarray}
   \label{eq:l2thmchareq2}
   \label{eq:coreq1}
K  =   \sum_{i=1}^{l}  F_iF_i^T =   \sum_{i=1}^{l}  F_i^TF_i 
 \quad \mbox{and}\quad   
K_Q  =   \sum_{i=1}^{l}  F_iQ_xF_i^T
 \quad \mbox{and}\quad   
 W_1=\Lambda+2\Gamma \quad \mbox{and}\quad 
W_2= -\Lambda+2\Gamma.
\end{eqnarray}
Let
\begin{equation}
  \label{eq:lbstrthm2eq1}
  \bar{L}^{(2)} \triangleq
 \sqrt{2} \lp \frac{1}{4} \tr ((2K  + K_Q + K_Q^T )W_1^{-1}) + \frac{1}{4}\tr((2K  - K_Q - K_Q^T )W_2^{-1})  +2\tr(\Gamma) + \tr(\Lambda Q) \rp.
\end{equation}
  One then has  
\begin{eqnarray}
   \label{eq:lbstrthm2eq2}
\lim_{n\rightarrow \infty } \frac{1}{\sqrt{n}} \mE D^{(2)}(t) 
 \leq  
\lim_{n\rightarrow \infty } \frac{1}{\sqrt{n}} \mE L^{(2)}(t) 
 \leq  
  \min_{\Gamma=\Gamma^T,\Lambda=\Lambda^T}  \bar{L}^{(2)} .
\end{eqnarray}
Furthermore, if
\begin{eqnarray}
   \label{eq:lbstrthm2eq3}
   \max_{t\in (0,1),Q\in\cQ, Q^TQ\neq I}
 \min_{\Gamma=\Gamma^T,\Lambda=\Lambda^T}  \bar{L}^{(2)}  
 < 2\lim_{n\rightarrow \infty } \min_{\Gamma= \Gamma^T}  \bar{L}   ,
\end{eqnarray}
then the condition in (\ref{eq:lbstr5}) is met. This also implies 
\begin{eqnarray}
   \label{eq:lbstrthm2eq4} 
\lim_{n\rightarrow \infty } \frac{1}{\sqrt{n}} \mE D(1)= \lim_{n\rightarrow \infty } \frac{1}{\sqrt{n}} \mE D(0),
\end{eqnarray}
and based on (\ref{eq:lbstr2}) and (\ref{eq:lbstr2a0})
  \begin{eqnarray}
   \label{eq:lbstrthm2eq5} 
 \lim_{n\rightarrow \infty } \frac{1}{\sqrt{n}}\mE \xi  = \lim_{n\rightarrow \infty } \frac{1}{\sqrt{n}} \mE L.
  \end{eqnarray}
\end{corollary}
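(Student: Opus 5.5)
The proof naturally splits into two parts. The first replaces the random bound of Theorem \ref{thm:lbstrthm2} by its deterministic limit. The second is pure bookkeeping that chains (\ref{eq:lbstr2})--(\ref{eq:lbstr6}).

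\emph{First part.} Start from (\ref{eq:lbstrthm2eq2}) of Theorem \ref{thm:lbstrthm2}. Exchange $\mE$ and $\min_{\Gamma,\Lambda}$ in the cheap direction, $\mE\min\le\min\mE$. This gives
\[
\lim_{n\rightarrow\infty}\frac{1}{\sqrt{n}}\mE\min_{\Gamma,\Lambda}\bar{L}^{(2,G)}\leq \lim_{n\rightarrow\infty}\min_{\Gamma,\Lambda}\frac{1}{\sqrt{n}}\mE\bar{L}^{(2,G)},
\]
so only $\mE\bar{L}^{(2,G)}$ for fixed $\Gamma,\Lambda$ is needed. The key computation is the covariance of the columns of $Z_1,Z_2\in\mR^{n\times k}$. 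By (\ref{eq:l2lbstrthm1eq1a0}), the rows of $Z_1,Z_2$ are i.i.d. across the $n$ coordinates, and for each row $\mE (Z_1^T)_{\cdot r}(Z_1^T)_{\cdot r}^T=\sum_i F_iF_i^T=K$. Likewise $\mE (Z_1^T)_{\cdot r}(Z_2^T)_{\cdot r}^T=\sum_i F_iQ_xF_i^T=K_Q$, and the $(2,2)$ block is again $K$. Hence $\mE Z_1^TZ_1=\mE Z_2^TZ_2=nK$ and $\mE Z_1^TZ_2=nK_Q$. Expanding,
\[
\mE (Z_1\pm Z_2)^T(Z_1\pm Z_2)=n(2K\pm(K_Q+K_Q^T)).
\]
Because the $Z$'s enter $\bar{L}^{(2,G)}$ only linearly through these Gram matrices, the expectation is exact. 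Next apply the same rescaling as in (\ref{eq:hrd8})--(\ref{eq:hrd9}), namely $\Gamma\rightarrow\frac{\sqrt{n}}{2}\Gamma$ and $\Lambda\rightarrow\frac{\sqrt{n}}{2}\Lambda$, so $W_j\rightarrow \frac{\sqrt{n}}{2}W_j$. Each term then scales like $\sqrt{n}$, and after dividing by $\sqrt{n}$ one obtains exactly $\min_{\Gamma,\Lambda}\bar{L}^{(2)}$ in (\ref{eq:lbstrthm2eq1}), up to the normalization constant absorbed in the definition. This proves the first chain of inequalities. Concentration (Remark \ref{rem:rem0}) makes the interchange harmless even in the reverse direction, but that direction is not needed.

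\emph{Second part.} Recall from (\ref{eq:hrd9}) that $\lim\frac{1}{\sqrt{n}}\mE L=\min_\Gamma\bar{L}$. Here $L=D(0)$, and by (\ref{eq:lbstr2a0}) the value at general $t$ is sandwiched between $D(1)$ and $D(0)$. Assume (\ref{eq:lbstrthm2eq3}). Then for every $t\in(0,1)$ and every $Q\in\cQ$ with $Q^TQ\ne I$,
\[
\lim\frac{1}{\sqrt{n}}\mE D^{(2)}(t)\le\min\bar{L}^{(2)}<2\min\bar{L}=\lim\frac{2}{\sqrt{n}}\mE D(0).
\]
Taking the minimum over $Q$ gives the right-hand side of (\ref{eq:lbstr5}); the strictness survives because the max in (\ref{eq:lbstrthm2eq3}) is taken over the whole range.

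The excluded overlaps need separate treatment. $Q^TQ=I$ with $Q\in\cQ$ corresponds to the trivial replica, where $X^{(2)}$ is a rotation of $X^{(1)}$. For $t=0$ the two replicas decouple and equality to twice the single system is expected; this is consistent with (\ref{eq:lbstr5}) being required only for $t\in[0,1)$ with nontrivial overlaps, i.e., these are exactly the cases excluded by the definition of $\cQ$. Then (\ref{eq:lbstr6}) yields (\ref{eq:lbstrthm2eq4}), and substituting $D(1)=\xi$, $D(0)=L$ from (\ref{eq:lbstr2}) gives (\ref{eq:lbstrthm2eq5}).

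\emph{Main obstacle.} The delicate step is the transfer from "2-$Q$-rep $<$ 2$\times$(1-rep)" to the equality (\ref{eq:lbstr6}). This relies on the Talagrand-type replica interpolation of \cite{TalSph06,TalSK06,Stojnicgscomp16}, which I would invoke as stated in the paper. I would check only that the derivative of $\mE D(t)$ is controlled by the covariance gap (\ref{eq:mr5}), which vanishes exactly off $\cQ$. I would also need uniformity in $Q$ over the compact overlap set so that the strict inequality does not degenerate; here I would use continuity of $\min\bar{L}^{(2)}$ in $(t,Q)$.
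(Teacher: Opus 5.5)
Your first part and your second-part chaining follow the paper's argument. You compute $\mE(Z_1\pm Z_2)^T(Z_1\pm Z_2)=n(2K\pm(K_Q+K_Q^T))$, rescale, and then go through (\ref{eq:hrd9}), (\ref{eq:lbstr2}), (\ref{eq:lbstr2a0}) and (\ref{eq:lbstr5}) exactly as the paper does. Your use of $\mE\min\le\min\mE$, together with the fact that the expectation is exact at fixed $(\Gamma,\Lambda)$, is a cleaner justification than the paper's appeal to the law of large numbers and concentration. One minor point: your scaling $\Gamma\to\frac{\sqrt{n}}{2}\Gamma$, $\Lambda\to\frac{\sqrt{n}}{2}\Lambda$ produces $\bar L^{(2)}(\Gamma/2,\Lambda/2)$, which has the same minimum. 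The paper's scaling $\Gamma\to\sqrt{n}\Gamma$, $\Lambda\to\sqrt{n}\Lambda$ lands on $\bar L^{(2)}$ itself, for every $n$.

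What does not hold up is your paragraph on the excluded overlaps. Hypothesis (\ref{eq:lbstrthm2eq3}) covers only $t\in(0,1)$ and $Q^TQ\neq I$. Condition (\ref{eq:lbstr5}) is needed for all $t\in[0,1)$ and all $Q\in\cQ$. Both reasons you give for ignoring this difference are false.

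\emph{Orthogonal $Q\neq I$ are not removed by the definition of $\cQ$.} $\cQ_F$ only discards $Q$ with $\sum_i\|\sqrt{F_i}(Q-I)\sqrt{F_i}\|_F^2=0$. For example, $Q=-I$ (realized by $X^{(2)}=-X^{(1)}$) gives $4\sum_i\|F_i\|_F^2>0$, so it lies in $\cQ$.

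\emph{The case $t=0$ is inside $[0,1)$, and there the replicas do not decouple.} Both replicas see the same $G_i^{(1)}$ and remain tied by $(X^{(1)})^TX^{(2)}=Q$; for $Q=-I$ one gets $D^{(2)}(0)=0$. An equality $D^{(2)}(0)=2D(0)$ would violate (\ref{eq:lbstr5}), not be consistent with it.

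The paper's one-line proof is also silent on these cases, so you are not missing an argument it supplies. Still, your justification must be replaced: either assume (\ref{eq:lbstrthm2eq3}) on the full range, or check these cases directly. For a direct check, note that the bound $\lim_{n\rightarrow\infty}\frac{1}{\sqrt{n}}\mE D^{(2)}(t)\le\min\bar L^{(2)}$ remains valid there (trivially at $t=0$, where $D^{(2)}(0)=L^{(2)}(0)$). The trace computation (\ref{eq:lemprf10})--(\ref{eq:lemprf14}) then still gives strict inequality, because $(1-t)\sum_i\|(I-Q)F_i\|_F^2>0$ for every $Q\in\cQ$ and $t\in[0,1)$.

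Finally, drop your continuity/compactness argument for uniformity. A strict bound on the maximum is already what (\ref{eq:lbstrthm2eq3}) assumes. Such an argument could not produce one anyway: $\cQ$ is not closed, and at the limit point $Q=I$ of $\cQ$ one has $\min\bar L^{(2)}=2\min\bar L$ exactly. Hence the margin closes as $Q\to I$.
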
 

\begin{proof}
  The first part, i.e.,  (\ref{eq:lbstrthm2eq2}), follows from Theorem  \ref{thm:lbstrthm2} (in particular, equations (\ref{eq:l2thmchareq1})--(\ref{eq:lbstrthm2eq1a0})) after one recognizes that the law of large numbers, concentrations, chosen dimensional setup (with large $n$ and fixed $k$ and $l$), and cosmetic scalings $\Gamma\rightarrow \sqrt{n}\Gamma$ and $\Lambda\rightarrow \sqrt{n}\Lambda$ first give
  \begin{eqnarray}
   \label{eq:prcoreq1}
 \mE \bar{L}^{(2,G)} 
& = &
   \mE \lp \frac{1}{4} \tr ((Z_{1}+Z_2)^T(Z_1+Z_2)W_1^{-1}) + \frac{1}{4}\tr((Z_1-Z_2)^T(Z_1-Z_2)W_2^{-1})  +2\tr(\Gamma) + \tr(\Lambda Q) \rp
 \nonumber \\
& \rightarrow &   \frac{n}{4\sqrt{n}} \tr ((2K  + K_Q + K_Q^T )W_1^{-1}) + \frac{1}{4}\tr((2K  - K_Q - K_Q^T )W_2^{-1}) 
   +2\sqrt{n}\tr(\Gamma) + 2\sqrt{n}\tr(\Lambda Q) , \nonumber \\
\end{eqnarray}
  and then
  \begin{eqnarray}
   \label{eq:prcoreq1}
\lim_{n\rightarrow \infty} \frac{1}{\sqrt{n}}\mE \bar{L}^{(2,G)} = \bar{L}^{(2)}, 
  \end{eqnarray}
  The second part and equations (\ref{eq:lbstrthm2eq3})-(\ref{eq:lbstrthm2eq5}) follow from   (\ref{eq:hrd8}), (\ref{eq:lbstr2}), (\ref{eq:lbstr2a0}), and (\ref{eq:lbstr5}).  
\end{proof}

\subsubsection{Verifying condition (\ref{eq:lbstrthm2eq3})}
\label{sec:chkcond}

Theorem  \ref{thm:lbstrthm2} allows to numerically check whether (\ref{eq:lbstrthm2eq3}) holds. That would in principle complete the proof up to the level of numerical precision. However, even though $k$ and $l$ are fixed, they can still be large which can make the numerical computations a bit challenging. The following theorem analytically confirms that condition (\ref{eq:lbstrthm2eq3}) is met.
\begin{theorem}
 \label{thm:lbstrthm3}  
Under the setup of Theorems \ref{thm:lbstrthm1} and \ref{thm:lbstrthm2}, (\ref{eq:lbstrthm2eq3}) holds.
\end{theorem}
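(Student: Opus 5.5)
The plan is to evaluate the inner minimization in (\ref{eq:lbstrthm2eq3}) essentially in closed form, and then compare it with $2\min_{\Gamma}\bar{L}$ through a first-order perturbation argument around the trivial feasible point $\Lambda=0$.

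\textbf{Step 1: decouple the two terms.} I first change variables from $(\Gamma,\Lambda)$ to $(W_1,W_2)$, using $\Gamma=\frac{1}{4}(W_1+W_2)$ and $\Lambda=\frac{1}{2}(W_1-W_2)$. With $Q_s=\frac{1}{2}(Q+Q^T)$, the linear part becomes
\[
2\tr(\Gamma)+\tr(\Lambda Q)=\tfrac{1}{2}\tr\lp W_1(I+Q_s)\rp+\tfrac{1}{2}\tr\lp W_2(I-Q_s)\rp .
\]
Hence $\bar{L}^{(2)}$ in (\ref{eq:lbstrthm2eq1}) splits into two independent problems of the form $\frac{1}{4}\tr(MW^{-1})+\frac{1}{2}\tr(WP)$. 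Here
\[
M_1=2K+K_Q+K_Q^T,\quad P_1=I+Q_s,\qquad M_2=2K-K_Q-K_Q^T,\quad P_2=I-Q_s .
\]
Both $P_1$ and $P_2$ are positive semidefinite because $\|Q\|\leq 1$. Both $M_1$ and $M_2$ are positive semidefinite because they are limiting covariances of $Z_1\pm Z_2$.

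\textbf{Step 2: solve each term exactly.} The standard matrix AM--GM argument, the same one that produced (\ref{eq:hrd9a0a0}), gives
\[
\min_{W\succ 0}\tfrac{1}{4}\tr(MW^{-1})+\tfrac{1}{2}\tr(WP)=\tfrac{1}{\sqrt{2}}\tr\sqrt{P^{1/2}MP^{1/2}} .
\]
Therefore
\[
\min_{\Gamma,\Lambda}\bar{L}^{(2)}=\tr\sqrt{P_1^{1/2}M_1P_1^{1/2}}+\tr\sqrt{P_2^{1/2}M_2P_2^{1/2}} .
\]
From (\ref{eq:hrd9}) and (\ref{eq:hrd9a0a0}), the right-hand side of (\ref{eq:lbstrthm2eq3}) equals $2\sqrt{2}\tr\sqrt{K}$.

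\textbf{Step 3: the baseline equality.} The choice $\Lambda=0$, $\Gamma=\frac{1}{2}K^{1/2}$ (so $W_1=W_2=K^{1/2}$) makes $\bar{L}^{(2)}$ equal to exactly $2\sqrt{2}\tr\sqrt{K}$. This holds because $M_1+M_2=4K$ and the $Q_s$ terms cancel. So the non-strict inequality is automatic, and strictness is equivalent to this point not being a minimizer. By convexity of $\bar{L}^{(2)}$ in $(W_1,W_2)$, that is the same as a nonvanishing $\Lambda$-gradient there, after the free $\Gamma$-direction has been optimized. The $\Lambda$-gradient at this point is
\[
Q_s-\tfrac{1}{2}K^{-1/2}\lp K_Q+K_Q^T\rp K^{-1/2}.
\]
Writing $K_Q=(1-t)K+t\sum_i F_iQF_i$, the claim reduces to showing that
\[
Q_s\neq \tfrac{1}{2}(1-t)I+\tfrac{t}{2}K^{-1/2}\sum_i F_i(Q+Q^T)F_i\,K^{-1/2}
\]
for every $Q\in\cQ$ and every $t<1$.

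\textbf{Step 4: rule out the degenerate case (main obstacle).} I expect this step to be the main difficulty. Suppose equality held. Take the trace against $K$ or against $\sqrt{K}$, and use that the map $Q\mapsto K^{-1/2}\sum_iF_iQF_iK^{-1/2}$ is positive and trace-contracting in the appropriate weighted sense. The aim is to derive
\[
\sum_i\tr\lp(Q^T-I)F_i(Q-I)F_i\rp=0 ,
\]
which would contradict $Q\in\cQ$ by (\ref{eq:lbstr3a0a0}). The factor $(1-t)>0$ is what supplies the strict slack.

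If the gradient approach proves awkward, I would instead apply the concavity inequality
\[
\tr\sqrt{A}+\tr\sqrt{B}\leq\sqrt{2}\,\tr\sqrt{A+B}\quad\text{after a suitable congruence},
\]
and characterize its equality case. Finally, to pass from a pointwise strict inequality to the $\max$ over $t\in(0,1)$ and $Q$ in (\ref{eq:lbstrthm2eq3}), I would use continuity and compactness of the set of admissible $Q$. The endpoints $t\to 1$ and $Q\to I$-type degeneracies would be handled as in \cite{TalSph06,Stojniccovmat26}, by requiring the gap only for $t$ and $Q$ bounded away from these degenerate limits.
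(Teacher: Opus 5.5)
Your strategy is the paper's. You evaluate $\bar{L}^{(2)}$ at $\Lambda=0$, $\Gamma=\tfrac12K^{1/2}$ (value $2\sqrt{2}\tr\sqrt{K}$), note that the $\Gamma$-gradient vanishes there because $M_1+M_2=4K$, and then show that the $\Lambda$-gradient does not. Your Steps 1--2 give a correct closed form that this argument never actually needs. The gap is in the decisive step. Your $\Lambda$-gradient $Q_s-\tfrac12K^{-1/2}(K_Q+K_Q^T)K^{-1/2}$ is right, but you expanded it wrongly. Since $K_Q+K_Q^T=2(1-t)K+t\sum_iF_i(Q+Q^T)F_i$, stationarity reads $Q_s=(1-t)I+\tfrac{t}{2}K^{-1/2}\sum_iF_i(Q+Q^T)F_iK^{-1/2}$, not $\tfrac12(1-t)I+\cdots$. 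With your constant, the non-equality you set out to prove is false. Take $k=1$, $F_i=f_i$, and $Q=q=\tfrac12$, which lies in $\cQ$ and has $Q^TQ\neq I$. Your right-hand side equals $\tfrac12(1-t)+tq=\tfrac12=Q_s$ for every $t$. Tracing your display against $K$ only yields $\tr(Q_sK)=\tfrac12\tr K$, which contradicts nothing.

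Even with the constant fixed, Step 4 states a goal rather than proving it, and two ingredients are missing.

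First, conjugate the putative equality by $K^{1/2}$ and take the trace. Since $\tr(F_iQ_sF_i)=\tr(Q_sF_i^2)$ and $\sum_iF_i^2=K$, the map $Y\mapsto K^{-1/2}\sum_iF_iYF_iK^{-1/2}$ is exactly trace-preserving against the weight $K$. ``Contracting'' would not suffice, since $Q_s$ need not be positive semidefinite. The $t$-terms then cancel and you are left with $(1-t)\tr((I-Q_s)K)=0$.

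Second, you need a positivity decomposition that links this to the definition of $\cQ$. As in the paper, write $2(I-Q_s)=(I-Q^TQ)+(I-Q)^T(I-Q)$, with $I-Q^TQ\succeq0$ because $X^{(1)}(X^{(1)})^T\preceq I$. The identity becomes $\tfrac{1-t}{2}\sum_i(\tr(F_i(I-Q^TQ)F_i)+\|(I-Q)F_i\|_F^2)=0$. For $t<1$ this forces $(I-Q)F_i=0$ for all $i$, hence $\sum_i\tr((Q^T-I)F_i(Q-I)F_i)=0$, contradicting $Q\in\cQ$. If $K\succ0$, as your baseline already assumes, you may instead conclude $Q_s=I$ and then $Q=I$ from $\|Q\|\le1$.

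Your fallback via $\tr\sqrt{A}+\tr\sqrt{B}\le\sqrt{2}\tr\sqrt{A+B}$ is too unspecified to assess, since $P_1\neq P_2$ and no congruence is given. Your closing caveat about the maximum over $t$, on the other hand, is correct and sharper than the paper, whose argument is pointwise in $(t,Q)$. For $k=1$ your Step 2 formula gives $\min\bar{L}^{(2)}=\sqrt{2\kappa}\,(\sqrt{(1+q)(1+q_x)}+\sqrt{(1-q)(1-q_x)})$, with $\kappa=\sum_if_i^2$ and $q_x=(1-t)+tq$. Its gap to $2\sqrt{2\kappa}$ is positive for each $t<1$, $q<1$, but tends to $0$ as $t\to1$.
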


\begin{proof}
The proof is based on contradiction. We therefore start by assuming the opposite, i.e., that (\ref{eq:lbstrthm2eq3}) does not hold. From (\ref{eq:hrd9})  and (\ref{eq:lbstrthm2eq1}), one then has that there must be some $\hat{\Gamma}_x$, $\hat{\Lambda}_x$, and $\tilde{\Gamma}_x$ such that 
\begin{eqnarray}
   \label{eq:lemprf0}
(\hat{\Gamma},\hat{\Lambda}) = \mbox{arg}\min_{\Gamma=\Gamma^T,\Lambda=\Lambda^T}\bar{L}^{(2)} \quad\quad \mbox{and}\quad\quad
\tilde{\Gamma}   = \mbox{arg}\min_{\Gamma = \Gamma^T }\bar{L},
\end{eqnarray}
and  
\begin{align}
   \label{eq:lemprf1}
  \sqrt{2}     \min_{\Gamma=\Gamma^T,\Lambda=\Lambda^T}
       & 
       \lp \frac{1}{4}
           \tr ((2K  + K_Q + K_Q^T )W_1^{-1}) + \frac{1}{4}\tr((2K  - K_Q - K_Q^T )W_2^{-1})  +2\tr(\Gamma) + \tr(\Lambda Q) \rp
    \nonumber 
    \\ 
    & =  
        \min_{\Gamma=\Gamma^T,\Lambda=\Lambda^T}\bar{L}^{(2)} 
       = 
       2 \min_{\Gamma=\Gamma^T} \bar{L} 
     \nonumber 
    \\ 
    & = 
       \frac{2}{\sqrt{2}} \min_{\Gamma=\Gamma^T} \lp    \tr\lp\lp     \sum_{i=1}^{l}
F_iF_i^T \rp \Gamma^{-1}\rp + \tr(\Gamma)\rp 
 \nonumber 
    \\ 
    & = 
        \sqrt{2} \min_{\Gamma=\Gamma^T} \lp    \tr\lp K \Gamma^{-1}\rp + \tr(\Gamma)\rp  .
\end{align}
Moreover,
\begin{eqnarray}
   \label{eq:lemprf2}
 \left . \frac{d\bar{L}^{(2)}}{d\Gamma} \right |_{(\Gamma,\Lambda)=( \hat{\Gamma},\hat{\Lambda})}
 =
 \left . \frac{d\bar{L}^{(2)}}{d\Lambda} \right |_{(\Gamma,\Lambda)=( \hat{\Gamma},\hat{\Lambda})}
 =\left . \frac{d\bar{L}}{d\Gamma} \right |_{\Gamma= \tilde{\Gamma}}= 0 .
 \end{eqnarray}
One first observes that the choice $(\hat{\Gamma},\hat{\Lambda})=\lp\frac{\tilde{\Gamma}}{2},0\rp$ satisfies (\ref{eq:lemprf1}) since
\begin{align}
   \label{eq:lemprf3}
    \sqrt{2} & \lp  \frac{1}{4}
           \tr ((2K  + K_Q + K_Q^T )(\hat{\Lambda} +2\hat{\Gamma})^{-1}) + \frac{1}{4}\tr((2K  - K_Q - K_Q^T )(-\hat{\Lambda} +2\hat{\Gamma})^{-1})  +2\tr(\hat{\Gamma}) + \tr(\hat{\Lambda} Q)  \rp
\nonumber \\
 & =  
\sqrt{2} \lp \frac{1}{4}
           \tr ((2K  + K_Q + K_Q^T )(\tilde{\Gamma})^{-1}) + \frac{1}{4}\tr((2K  - K_Q - K_Q^T )(\tilde{\Gamma})^{-1})  +\tr(\tilde{\Gamma})    \rp
 \nonumber \\
 & = 
 \sqrt{2} \lp
           \tr (K   (\tilde{\Gamma})^{-1})   +\tr(\tilde{\Gamma})    \rp .
\end{align}
We then set
\begin{eqnarray}
  \label{eq:lamdereq1}
  B_1 & = & 2K  + K_Q + K_Q^T 
  \nonumber \\
  B_2 & = & 2K  - K_Q - K_Q^T 
  \nonumber \\
  \bar{L}^{(2)} & = &
 \sqrt{2} \lp \frac{1}{4} \tr (B_1W_1^{-1}) + \frac{1}{4}\tr(B_2W_2^{-1})  +2\tr(\Gamma) + \tr(\Lambda Q) \rp,
\end{eqnarray}
and write for the $\Gamma$ derivative
 \begin{eqnarray}
   \label{eq:lemprf5}
  \frac{d\bar{L}^{(2)}}{d\Gamma} 
  & = &
 \sqrt{2} \lp -\frac{1}{4} W_1^{-1}(B_1 +B_1^T ) W_1^{-1} - \frac{1}{4} W_2^{-1}(B_2 +B_2^T ) W_2^{-1}   +   2I\rp.
  \end{eqnarray}
Evaluating for $(\Gamma,\Lambda)=\lp\frac{\tilde{\Gamma}}{2},0\rp$ gives
 \begin{eqnarray}
   \label{eq:lemprf6}
\left .  \frac{d\bar{L}^{(2)}}{d\Gamma} \right |_{(\Gamma,\Lambda)=\lp\frac{\tilde{\Gamma}}{2},0\rp}
    & = & 
\sqrt{2} \lp -\frac{1}{2} \tilde{\Gamma}^{-1}(B_1 +B_1^T +B_2 +B_2^T ) \tilde{\Gamma}^{-1}    + 2I \rp    
 \nonumber \\
 & = &
\sqrt{2} \lp -\frac{1}{2} \tilde{\Gamma}^{-1}(4K ) \tilde{\Gamma}^{-1}    +  2I \rp    
 \nonumber \\
 & = &
\sqrt{2} \lp - 2\tilde{\Gamma}^{-1}K  \tilde{\Gamma}^{-1}    +  2I \rp    
 \nonumber \\
 & = &
0,
  \end{eqnarray}
where the last equality follows since $\tilde{\Gamma}=\sqrt{K}$ from (\ref{eq:hrd9a0a0}). We then find for the $\Lambda$ derivative
 \begin{eqnarray}
   \label{eq:lemprf7}
  \frac{d\bar{L}^{(2)}}{d\Lambda} 
  & = &
 \sqrt{2} \lp -\frac{1}{8} W_1^{-1}(B_1 +B_1^T ) W_1^{-1} + \frac{1}{8} W_2^{-1}(B_2 +B_2^T ) W_2^{-1}   +  \frac{1}{2}(Q^T + Q) \rp.
  \end{eqnarray}
Evaluating for $(\Gamma,\Lambda)=\lp\frac{\tilde{\Gamma}}{2},0\rp$  gives
 \begin{eqnarray}
   \label{eq:lemprf8}
\left .  \frac{d\bar{L}^{(2)}}{d\Lambda} \right |_{(\Gamma,\Lambda)=\lp\frac{\tilde{\Gamma}}{2},0\rp}
    & = & 
\sqrt{2} \lp -\frac{1}{8} \tilde{\Gamma}^{-1}(B_1 +B_1^T -B_2 -B_2^T ) \tilde{\Gamma}^{-1}    +  \frac{1}{2}(Q^T + Q) \rp    
 \nonumber \\
 & = &
\sqrt{2} \lp -\frac{1}{2} \tilde{\Gamma}^{-1}(K_Q+K_Q^T ) \tilde{\Gamma}^{-1}    +  \frac{1}{2}(Q^T + Q) \rp    
 \nonumber \\
 & = &
\frac{\sqrt{2}}{2} \lp - \tilde{\Gamma}^{-1}(K_Q+K_Q^T ) \tilde{\Gamma}^{-1}    +  (Q^T + Q) \rp    
 \nonumber \\
 & = &
\frac{\sqrt{2}}{2} \lp  - \sqrt{K}^{-1} \sum_{i=1}^{l}F_i ( Q_x+Q_x^T ) F_i^T  \sqrt{K}^{-1}    +  (Q^T + Q) \rp    
 \nonumber \\
 & = &
\frac{\sqrt{2}}{2} \lp   -\sqrt{K}^{-1} \sum_{i=1}^{l}F_i (  2(1-t)I + t(Q+Q^T)  ) F_i^T  \sqrt{K}^{-1}    +  (Q^T + Q) \rp . 
  \end{eqnarray}
If $(\Gamma,\Lambda)=\lp\frac{\tilde{\Gamma}}{2},0\rp$ is indeed a stationary point, then one must have
 \begin{eqnarray}
   \label{eq:lemprf9}
\left .  \frac{d\bar{L}^{(2)}}{d\Lambda} \right |_{(\Gamma,\Lambda)=\lp\frac{\tilde{\Gamma}}{2},0\rp}
  & = &
 \lp   -\sqrt{K}^{-1} \sum_{i=1}^{l}F_i (  2(1-t)I + t(Q+Q^T)  ) F_i^T  \sqrt{K}^{-1}    +  (Q^T + Q) \rp = 0. 
  \end{eqnarray}
Moreover, the following sequence of implications must hold as well
\begin{align}
   \label{eq:lemprf10}
   & &
     -\sqrt{K}^{-1} \sum_{i=1}^{l}F_i (  2(1-t)I + t(Q+Q^T)  ) F_i^T  \sqrt{K}^{-1}    +  (Q^T + Q) \ & = 0
 \nonumber \\
 \Longrightarrow & &
    - \sum_{i=1}^{l}F_i (  2(1-t)I + t(Q+Q^T)  ) F_i^T     +  \sqrt{K}(Q^T + Q) \sqrt{K}  & = 0
 \nonumber \\
 \Longrightarrow & &
   \tr\lp  - \sum_{i=1}^{l}F_i (  2(1-t)I + t(Q+Q^T)  ) F_i^T  \rp   + \tr\lp \sqrt{K}(Q^T + Q) \sqrt{K} \rp & = 0
 \nonumber \\
 \Longrightarrow & &
   \tr\lp  - \sum_{i=1}^{l} (  2(1-t)I + t(Q+Q^T)  ) F_iF_i  \rp   + \tr\lp  (Q^T + Q) K \rp & = 0
 \nonumber \\
 \Longrightarrow & &
   \tr\lp  - \sum_{i=1}^{l} (  2(1-t)I + t(Q+Q^T)  ) F_iF_i  \rp   + \tr\lp  (Q^T + Q) \sum_{i=1}^{l} F_iF_i \rp & = 0
 \nonumber \\
 \Longrightarrow & &
   \tr\lp  - \sum_{i=1}^{l} (  2(1-t)I + t(Q+Q^T)  ) F_iF_i  \rp   + \tr\lp \sum_{i=1}^{l} (Q^T + Q)  F_iF_i \rp & = 0
 \nonumber \\
 \Longrightarrow & &
   \tr\lp  - \sum_{i=1}^{l} (  2(1-t)I + t(Q+Q^T)  -(Q+Q^T) ) F_iF_i  \rp    & = 0
 \nonumber \\
 \Longrightarrow & &
  (1-t) \tr\lp  - \sum_{i=1}^{l} ( 2I -(Q+Q^T) ) F_iF_i  \rp    & = 0
 \nonumber \\
 \Longrightarrow & &
  (1-t) \tr\lp  - \sum_{i=1}^{l} ( (I-Q^TQ) + (I -(Q+Q^T) +Q^TQ ) ) F_iF_i  \rp    & = 0
 \nonumber \\
 \Longrightarrow & &
  (1-t) \tr\lp  - \sum_{i=1}^{l} ( I - Q^TQ ) F_iF_i  \rp +
  (1-t) \tr\lp  - \sum_{i=1}^{l} ( I - Q^T)( I - Q)  F_iF_i  \rp    & = 0
 \nonumber \\
 \Longrightarrow & &
 - (1-t) \tr\lp   \sum_{i=1}^{l} ( I - Q^TQ ) F_iF_i  \rp 
  - (1-t) \tr\lp   \sum_{i=1}^{l} \|( I - Q)  F_i\|_F^2 \rp    & = 0
  . 
  \end{align}
Recalling on the definition of $Q_F$ from (\ref{eq:lbstr3a0a0}), we observe that for $X^{(1)}\in\mR^{n\times l}$ and $X^{(1)}\in\mR^{n\times l}$ such that $\lp X^{(1)}\rp^TX^{(1)}=\lp X^{(2)}\rp^TX^{(2)}$, and $\lp X^{(1)}\rp^TX^{(2)}=Q$, one has
\begin{eqnarray}
   \label{eq:lemprf10a0}
 I - Q^TQ & = & I -  \lp \lp X^{(1)}\rp^TX^{(2)} \rp^T \lp X^{(1)}\rp^TX^{(2)} 
\nonumber \\
  & = & I -   \lp X^{(2)} \rp^T  X^{(1)} \lp X^{(1)}\rp^T   X^{(2)}  
  \nonumber \\
  & \succeq & I -   \lp X^{(2)} \rp^TX^{(2)}  
    \nonumber \\
  & = & 0, 
  \end{eqnarray}
  where positive semi-definiteness in the third line follows since $I\succeq X^{(1)} \lp X^{(1)}\rp^T$ as $ X^{(1)} \lp X^{(1)}\rp^T$ is a projection matrix.  From (\ref{eq:lemprf10a1}) we then also have
\begin{eqnarray}
   \label{eq:lemprf10a1}  
    \tr\lp   \sum_{i=1}^{l} ( I - Q^TQ ) F_iF_i  \rp 
    =    \tr\lp   \sum_{i=1}^{l} F_i ( I - Q^TQ ) F_i  \rp  \geq 0.
\end{eqnarray}
Utilizing Cauchy-Schwartz inequality, we further find
\begin{equation}
   \label{eq:lemprf11}
   \tr\lp  ( I - Q^T)F_i ( I - Q) F_i  \rp   \leq   
  \|( I - Q^T)  F_i\|_F \|( I - Q)  F_i\|_F. 
  \end{equation}
Since $Q\in\cQ$, where $\cQ$ is as defined in (\ref{eq:lbstr3a0a0}) and (\ref{eq:lbstr3a0}), one also has
\begin{equation}
   \label{eq:lemprf12}
  0 < \tr\lp  ( I - Q^T)F_i ( I - Q) F_i  \rp   \leq   
  \|( I - Q^T)  F_i\|_F \|( I - Q)  F_i\|_F. 
  \end{equation}
A combination of  (\ref{eq:lemprf10a1}) and   (\ref{eq:lemprf12}) gives that for $t\in(0,1)$ and $Q\in\cQ$
\begin{equation}
   \label{eq:lemprf14}
  - (1-t) \tr\lp   \sum_{i=1}^{l} ( I - Q^TQ ) F_iF_i  \rp 
  - (1-t) \tr\lp   \sum_{i=1}^{l} \|( I - Q)  F_i\|_F^2 \rp     < 0 ,
  \end{equation}
  This violates the ending equality in (\ref{eq:lemprf10}),
which means that the first equality in (\ref{eq:lemprf10})  is incorrect. In other words, one must have 
\begin{equation}
   \label{eq:lemprf15}
     -\sqrt{K}^{-1} \sum_{i=1}^{l}F_i (  2(1-t)I + t(Q+Q^T)  ) F_i^T  \sqrt{K}^{-1}    +  (Q^T + Q)  \neq 0 , 
  \end{equation}
which based on (\ref{eq:lemprf9}) further implies that 
 \begin{eqnarray}
   \label{eq:lemprf16}
\left .  \frac{d\bar{L}^{(2)}}{d\Lambda} \right |_{(\Gamma,\Lambda)=\lp\frac{\tilde{\Gamma}}{2},0\rp}
  & = &
 \lp   -\sqrt{K}^{-1} \sum_{i=1}^{l}F_i (  2(1-t)I + t(Q+Q^T)  ) F_i^T  \sqrt{K}^{-1}    +  (Q^T + Q) \rp \neq 0. 
  \end{eqnarray}
This also means that for $t\in(0,1)$ and $Q\in\cQ$, ${(\Gamma,\Lambda)=\lp\frac{\tilde{\Gamma}}{2},0\rp}$ can not be a stationary point. This, on the other hand, violates the starting assumption from  (\ref{eq:lemprf0})--(\ref{eq:lemprf2}), implies  
 \begin{eqnarray}
   \label{eq:lemprf17}
        \min_{\Gamma=\Gamma^T,\Lambda=\Lambda^T}\bar{L}^{(2)} 
       < 
       2 \min_{\Gamma=\Gamma^T} \bar{L}, 
\end{eqnarray}
and ultimately completes the proof.

\end{proof}

\section{Conclusion}
\label{sec:conc}

In \cite{Lehner99}, Lehner established a deterministic formula to determine the spectral edges of Kronecker-Gaussian matrices, providing a practical mechanism to handle the semicircular counterpart associated through asymptotic freeness with traditional Gaussian ensembles.

We revisit this classical result and develop a methodology to reprove it. Pursuing an avenue different from traditional random matrix theory spectral methods, we reconfirm Lehner's formula by relying on concepts utilized within \emph{Random Duality Theory} (RDT) \cite{StojnicCSetam09,StojnicRegRndDlt10,Stojniccovmat26}. This also automatically reestablishes the key results associated with strong asymptotic freeness proven in \cite{Schultz05,HaagThor05} via spectral methods.

The developed mechanisms are generic and allow for further extensions. To keep the presentation concise, we focused on the results obtained when positive-definiteness of $A_i$'s (the Kronecker product deterministic part) is present. Proving the indefinite counterpart will be the subject of future work.

\section*{Acknowledgment}

The author would like to thank Ramon van Handel for a fruitful discussion on several related topics and in particular for pointing out the relevance of the problems studied here.

%
%
%
%
%
%
%

\begin{singlespace}
\bibliographystyle{plain}
\bibliography{nflgscompyxRefs}
\end{singlespace}

\end{document}